\documentclass[11pt]{amsart}

\usepackage{graphicx}
\usepackage[english]{babel}
\usepackage[T1]{fontenc}
\usepackage[latin1]{inputenc}
\usepackage{amsfonts}
\usepackage{amssymb}
\usepackage{amsthm}
\usepackage{amsmath}
\usepackage{tikz}
\usepackage{float}
\usepackage{enumerate}
\usepackage{accents}
\usepackage{mathtools}
\usepackage{mathrsfs}
\usepackage{comment}
\usepackage{afterpage}
\usepackage{bbm}
\usepackage{dsfont}
\usepackage{stmaryrd}
\usepackage{hyperref}
\usepackage{mleftright}
\usepackage{subfig}
\usepackage{soul}
\usepackage{tikz-cd} 
\usepackage[foot]{amsaddr}

\theoremstyle{plain}
\newtheorem{thm}{Theorem}[section]
\newtheorem{lem}[thm]{Lemma}
\newtheorem{prop}[thm]{Proposition}
\newtheorem{cor}[thm]{Corollary}
\newtheorem*{thm*}{Theorem}
\newtheorem*{prop*}{Proposition}
\newtheorem*{cor*}{Corollary}

\theoremstyle{definition}
\newtheorem{defn}[thm]{Definition}

\newtheorem{rmk}[thm]{Remark}
\newtheorem*{rmk*}{Remark}

\newtheorem*{quest*}{Question}

\newtheorem*{defn*}{Definition}

\renewcommand{\o}{\circ}

\newcommand{\R}{\mathbb{R}}
\newcommand{\Z}{\mathbb{Z}}

\newcommand{\s}{\sigma}
\newcommand{\ra}{\rightarrow}

\newcommand{\LRa}{\Leftrightarrow}
\newcommand{\cu}{\subseteq}
\newcommand{\g}{\gamma}
\newcommand{\G}{\Gamma}

\newcommand{\mc}{\mathcal}
\newcommand{\mf}{\mathfrak}
\newcommand{\x}{\times}

\newcommand{\om}{\omega}
\newcommand{\id}{\mathrm{id}}
\newcommand{\acts}{\curvearrowright}

\newcommand{\mscr}{\mathscr}

\newcommand{\CAT}{{\rm CAT(0)}}

\DeclareMathOperator{\lk}{lk}
\DeclareMathOperator{\St}{st}
\DeclareMathOperator{\aut}{Aut}
\DeclareMathOperator{\is}{Isom}
\renewcommand{\L}{\Lambda}
\newcommand{\C}{\mc{C}}
\newcommand{\pf}{\pitchfork}

\begin{document}

\title[Automorphisms of contact graphs]{Automorphisms of contact graphs of \\ $\CAT$ cube complexes}
\author[E. Fioravanti]{Elia Fioravanti}\address{Max Planck Institute for Mathematics, Bonn, Germany}\email{fioravanti@mpim-bonn.mpg.de} 

\begin{abstract}
We show that, under weak assumptions, the automorphism group of a $\CAT$ cube complex $X$ coincides with the automorphism group of Hagen's contact graph $\C(X)$. The result holds, in particular, for universal covers of Salvetti complexes, where it provides an analogue of Ivanov's theorem on curve graphs of non-sporadic surfaces. This highlights a contrast between contact graphs and Kim--Koberda extension graphs, which have much larger automorphism group.

We also study contact graphs associated with Davis complexes of right-angled Coxeter groups. We show that these contact graphs are less well-behaved and describe exactly when they have more automorphisms than the universal cover of the Davis complex.
\end{abstract}

\maketitle

\section{Introduction}

The curve graph associated with a finite-type surface is a fundamental object in the study of mapping class groups \cite{Harvey1,Harvey2}. It has arguably been at the heart of the monumental developments in our understanding of mapping class groups and Kleinian groups that were initiated by the work of Masur and Minsky \cite{MM1,MM2} and have ultimately led to the solution of Thurston's ending lamination conjecture \cite{ending-1,ending-2}.  

An important property of curve graphs is that they are \emph{rigid}: Ivanov showed that every automorphism of the curve graph of a non-sporadic surface is induced by an element of the extended mapping class group \cite{Ivanov1,Ivanov2}. This was a central ingredient first in the computation of the abstract commensurator \cite{Ivanov2}, and later in the proof that mapping class groups are quasi-isometrically rigid \cite{BKMM,Ham-QI}. Due to the classical result of Tits that the automorphism group of a building coincides with the associated algebraic group \cite{Tits-build}, Ivanov's theorem also strengthens the analogy between mapping class groups and arithmetic groups.

It was recently shown by Behrstock, Hagen and Sisto \cite{HHS1,HHS2} that the Masur--Minsky machinery can be applied to a much vaster class of spaces, which they name \emph{hierarchically hyperbolic spaces}. This has proved a fruitful approach to a number of problems \cite{HHS-asdim,HHS-bound,HHS-dehn}, notably allowing these authors to obtain a particularly strong rigidity result for quasi-flats \cite{HHS-qf}.

It is natural to wonder if analogues of Ivanov's theorem hold for other hierarchically hyperbolic spaces. In this note, we address this question for $\CAT$ cube complexes. 

Many $\CAT$ cube complexes were shown to be hierarchically hyperbolic in \cite{HHS1,Hagen-Susse}, with Hagen's \emph{contact graph} playing the role of a curve complex. In particular, all virtually special groups \cite{HW-GAFA} are hierarchically hyperbolic, and these include all right-angled Artin and Coxeter groups. In the case of right-angled Artin groups, a parallel between curve graphs and \emph{extension graphs} (a relative of contact graphs) had already been observed by Kim and Koberda \cite{KK13,KK14}.

\medskip
For a general $\CAT$ cube complex $X$, there are actually at least three (closely related) graphs that can claim some analogy with curve graphs:
\begin{enumerate}
\item[(i)] Hagen's \emph{contact graph} $\C(X)$ \cite{Hagen-contact} mentioned above. Vertices are hyperplanes of $X$ and edges connect pairs of hyperplanes that are not separated by a third.
\item[(ii)] The \emph{crossing graph} $\C_{\pf}(X)$. This is the subgraph of $\C(X)$ with full vertex set and edges only joining pairs of transverse hyperplanes.
\item[(iii)] The \emph{reduced crossing graph} $\C_r(X)$. This is the simplicial graph obtained  by identifying vertices of $\C_{\pf}(X)$ with the same link (see Definition~\ref{C_r defn} for a more precise description). 

When $X_{\G}$ is the universal cover of a Salvetti complex, and if no two vertices of the graph $\G$ have the same link, then $\C_r(X_{\G})$ coincides with the \emph{extension graph} $\G^e$ introduced by Kim and Koberda \cite{KK13}.
\end{enumerate}

If $X$ is uniformly locally finite and has no cut vertices, then these three graphs are quasi-isometric to each other (see e.g.\ \cite[Appendix~A]{Genevois-surv}). By \cite[Theorem~4.1]{Hagen-contact}, they are quasi-trees, hence, in particular, they are $\delta$--hyperbolic. When, in addition, $X$ is hierarchically hyperbolic, a number of further analogies with curve graphs and mapping class groups is discussed in \cite{HHS1}, including acylindricity of actions, existence of hierarchy paths, and a Masur--Minsky-style distance formula. We also refer the reader to \cite{KK14} for other analogies in the case of right-angled Artin groups.

\medskip
If $\mc{G}(X)$ denotes any of the three graphs above, we have a natural homomorphism $\aut X\ra\aut\mc{G}(X)$. The closest we can get to an analogue of Ivanov's theorem is if one of these homomorphisms is an isomorphism. It is important to remark that the group $\aut X$ will often be \emph{uncountable} and, in fact, this is essentially always the case for universal covers of Salvetti complexes (see Remark~\ref{non-discrete} below).

It can be deduced from the work of Huang \cite{Huang-GT2} that the images of the homomorphisms $\aut X\ra\aut\C_{\pf}(X)$ and $\aut X\ra\aut\C_r(X)$ have uncountable index for most universal covers of Salvetti complexes (see Remark~\ref{other graphs don't work} below). This dashes any hopes that these two maps be isomorphisms, even for relatively harmless cube complexes like Salvetti complexes. 

As we are about to see, things are a lot better behaved for the contact graph $\C(X)$. We only draw the reader's attention to Example~7.1 in \cite{HHS1}, which shows that even the homomorphism $\iota\colon\aut X\ra\aut\C(X)$ cannot be an isomorphism in complete generality. 

\medskip
We say that a vertex $v\in X^{(0)}$ is \emph{extremal} if its link is a cone (cf.\ \cite[Definition~2.2]{BFI}). In other words, $v$ lies in the carrier of some hyperplane $\mf{w}$ that is transverse to all other hyperplanes containing $v$ in their carrier. We stress that any vertex that belongs to a single edge of $X$ is extremal.

Every hyperplane $\mf{w}\cu X$ inherits a structure of $\CAT$ cube complex from $X$, where cubes of $\mf{w}$ are intersections with cubes of $X$. In particular, vertices of $\mf{w}$ are in one-to-one correspondence with edges of $X$ crossing $\mf{w}$. It thus makes sense to speak of \emph{extremal vertices of $\mf{w}$}.

Our main result is the following (its two parts will be proved in Corollary~\ref{the map rho} and Theorem~\ref{isomorphism}, respectively). We denote by $S(X^{(0)})$ the group of permutations of the vertex set of $X$.

\begin{thm*} \hypertarget{main intro}{} 
Let $X$ be a uniformly locally finite $\CAT$ cube complex with no extremal vertices. Let $\iota\colon\aut X\ra\aut\C(X)$ be as above.
\begin{enumerate}
\item There exists a homomorphism $\rho\colon\aut\C(X)\ra S(X^{(0)})$ such that $\rho\o\iota=\id_{\aut X}$. In particular, the homomorphism $\iota$ is injective.
\end{enumerate}
Suppose in addition that no hyperplane of $X$ has extremal vertices. Then:
\begin{enumerate}
\setcounter{enumi}{1}
\item $\iota$ is a group isomorphism with inverse $\rho$.
\end{enumerate}
\end{thm*}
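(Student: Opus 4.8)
The plan is to derive (2) from (1). Part (1) provides a homomorphism $\rho\colon\aut\C(X)\ra S(X^{(0)})$ with $\rho\o\iota=\id_{\aut X}$, so in particular $\iota$ is injective. To upgrade this to an isomorphism with inverse $\rho$, it suffices to show that $\iota$ is surjective, and the candidate preimage is forced: if $\iota(g)=\phi$ then $g=\rho(\iota(g))=\rho(\phi)$. So for each $\phi\in\aut\C(X)$ I would set $f:=\rho(\phi)\in S(X^{(0)})$ and prove that $f$ is induced by a genuine cubical automorphism of $X$ and that $\iota(f)=\phi$. Granting this, $\iota$ is onto, hence bijective (being also injective), and $\rho\o\iota=\id$ then forces $\rho=\iota^{-1}$, which is exactly statement~(2).

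The first thing to record is the compatibility between $f$ and $\phi$ coming out of the construction of $\rho$ in part~(1): that $f$ carries every edge of $X$ dual to a hyperplane $\mf w$ to an edge dual to $\phi(\mf w)$. This already makes $f$ an automorphism of the $1$--skeleton $X^{(1)}$ respecting the labelling of edges by hyperplanes. To promote $f$ from a $1$--skeletal to a cubical automorphism, it is enough—because links of vertices in a $\CAT$ cube complex are flag—to check that $f$ sends squares to squares: a clique of pairwise-crossing hyperplanes at a vertex spans a cube, so preservation of edges and squares automatically upgrades to preservation of all higher-dimensional cubes. Now a square spanned by two crossing hyperplanes $\mf u,\mf w$ has $f$--image a $4$--cycle whose opposite sides are labelled $\phi(\mf u)$ and $\phi(\mf w)$, and this cycle bounds a square precisely when $\phi(\mf u)\pf\phi(\mf w)$. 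Applying the same reasoning to $\phi^{-1}$ and to $f^{-1}=\rho(\phi^{-1})$, I conclude that $f$ is cubical if and only if $\phi$ preserves the transversality relation in both directions; and once $f\in\aut X$, the edge--labelling compatibility gives $\iota(f)=\phi$ on the vertex set of $\C(X)$, hence on all of $\C(X)$.

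The main obstacle is therefore the single statement that every $\phi\in\aut\C(X)$ preserves transversality, i.e.\ $\mf u\pf\mf w\Ra\phi(\mf u)\pf\phi(\mf w)$. This is exactly the point at which the contact graph must behave better than the crossing and reduced crossing graphs (which is why those are ruled out in the introduction), since a priori an automorphism of $\C(X)$ need not distinguish a crossing from an osculation. Here is where I expect the hypothesis ``no hyperplane of $X$ has extremal vertices'' to be essential: it lets me run the reconstruction one dimension down, applying the mechanism behind part~(1) to the induced $\CAT$ cube complex structure on a hyperplane $\mf w$—which again has no extremal vertices—together with its link in $X$. Concretely, I would characterise ``$\mf u$ crosses $\mf w$'' purely in terms of the configuration that the hyperplanes contacting $\mf w$ induce on $\mf w$ inside the contact graph, in a way manifestly invariant under $\phi$, with the non-degeneracy of each $\mf w$ guaranteeing both that such a characterisation is available and that no osculating pair can masquerade as a crossing one. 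Verifying that this combinatorial description of transversality is correct and $\phi$--invariant is the technical heart of the argument; the reductions above are then the formal bookkeeping that turns it into statement~(2).
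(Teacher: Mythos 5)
Your global strategy --- show that $f:=\rho(\phi)$ is a cubical automorphism with $\iota(f)=\phi$, so that $\iota$ and $\rho$ are mutually inverse --- is the same as the paper's. The problem is that the step you dismiss as a recorded consequence of part~(1), namely that $f$ carries each edge of $X$ dual to $\mf{w}$ to an edge dual to $\phi(\mf{w})$, is not a consequence of part~(1) at all: it is essentially the entire content of part~(2). Part~(1) only produces a permutation of $X^{(0)}$ by matching maximal cliques of $\C(X)$; nothing in that construction says that adjacent vertices go to adjacent vertices. This is precisely where the hypothesis that hyperplanes have no extremal vertices enters: the paper proves (part~(2) of Lemma~\ref{extremal vertices of hyperplanes}) that under this hypothesis $v$ and $w$ are adjacent if and only if there is no $x\in X^{(0)}-\{v,w\}$ with $\mscr{W}_v\cap\mscr{W}_w\cu\mscr{W}_v\cap\mscr{W}_x$ --- a criterion phrased purely in terms of maximal cliques and their intersections, hence preserved by every $\phi\in\aut\C(X)$. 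That adjacency preservation can genuinely fail without the hypothesis is shown by the Davis-complex automorphisms constructed in the proof of Proposition~\ref{prop recalled}, where $\rho(\phi)$ sends the two endpoints of an edge to a diagonal pair of vertices of a square. So your proposal assumes the crux and leaves it unproved.

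Two further points. First, the step you single out as the main obstacle --- that $\phi$ preserves transversality, so that image $4$--cycles bound squares --- is not needed: the paper defines $\aut X$ as the automorphism group of the $1$--skeleton $X^{(1)}$ (an automorphism of the $1$--skeleton of a $\CAT$ cube complex extends canonically to the whole complex), so once $f$ preserves adjacency you are done. Your sketch of how one might characterise transversality $\phi$--invariantly is a plan rather than an argument, and the paper never takes this route. Second, even granting $f\in\aut X$, the identity $\iota(f)=\phi$ does not follow formally: a priori $\iota(f)$ and $\phi$ only induce the same permutation of maximal cliques, and upgrading this to equality on individual hyperplanes requires each set $\mc{I}^0(\mf{w})$ to be a singleton --- again a use of the hypothesis on hyperplanes, via part~(1) of Lemma~\ref{I(w) lemma}, part~(1) of Lemma~\ref{extremal vertices of hyperplanes} and Corollary~\ref{the kernel}. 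You elide this under the same unproved edge-labelling compatibility.
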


The main idea in the proof of the \hyperlink{main intro}{Theorem} is that, when $X$ has no extremal vertices, vertices of $X$ are in one-to-one correspondence with maximal cliques in $\C(X)$. If, in addition, the hyperplanes of $X$ have no extremal vertices, edges of $X$ correspond to pairs of maximal cliques of $\C(X)$ with the largest possible intersection.

Cube complexes with no free faces never have extremal vertices (see e.g.\ \cite[Remark~2.5]{BFI}), and neither do their hyperplanes. Hence:

\begin{cor*} 
Let $X$ be a uniformly locally finite $\CAT$ cube complex with no free faces. Then $\aut X\cong\aut\C(X)$ via the map $\iota$.
\end{cor*}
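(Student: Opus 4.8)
The plan is to deduce the Corollary directly from part~(2) of the \hyperlink{main intro}{Theorem}, whose only hypotheses beyond uniform local finiteness are that $X$ has no extremal vertices and that no hyperplane of $X$ has extremal vertices. Both of these will follow from the single implication, recorded in \cite[Remark~2.5]{BFI}, that a cube complex with no free faces has no extremal vertices. So the entire argument reduces to checking that this implication applies both to $X$ and to each of its hyperplanes, after which the Corollary is nothing more than an invocation of part~(2) of the Theorem.

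For $X$ itself this is immediate, since $X$ has no free faces by hypothesis. The only point genuinely requiring an argument is that every hyperplane $\mf w\cu X$ again has no free faces; granting this, the cited implication yields that $\mf w$ has no extremal vertices, and both hypotheses of part~(2) are in place. To establish the inheritance, I would use the standard fact that $D\mapsto\mf w\cap D$ is a poset isomorphism from the cubes of $X$ crossing $\mf w$ (ordered by inclusion) onto the cubes of $\mf w$, lowering dimension by one. The key observation is that crossing $\mf w$ is preserved upward: if $\mf w$ is dual to an edge of a cube $D$ and $D\cu D'$, then that same edge lies in $D'$, so $\mf w$ also crosses $D'$. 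Consequently this isomorphism identifies the cubes of $X$ containing a fixed $D$ with the cubes of $\mf w$ containing $C:=\mf w\cap D$, and hence induces an isomorphism of links $\lk_X(D)\cong\lk_{\mf w}(C)$. A free face is precisely a cube whose link is a single vertex, so $C$ is a free face of $\mf w$ if and only if $D$ is a free face of $X$. Since $X$ has no free faces, neither does $\mf w$.

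The main (and essentially only) obstacle is this last combinatorial identification, and it is mild: it rests entirely on the poset isomorphism between the cubes of a hyperplane and the cubes of $X$ that cross it, together with the elementary remark that a cube containing $D$ necessarily still crosses $\mf w$. Everything else is a formal consequence of part~(2) of the Theorem and the cited characterisation of extremal vertices, so once both ``no extremal vertices'' hypotheses have been verified the proof is complete.
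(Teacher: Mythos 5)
Your proposal is correct and follows essentially the same route as the paper: the paper's proof consists precisely of the observation that complexes with no free faces have no extremal vertices (citing \cite[Remark~2.5]{BFI}) and that the same holds for their hyperplanes, after which part~(2) of the Theorem applies. You merely make explicit the inheritance of the no-free-faces property by hyperplanes (via the inclusion-preserving bijection $D\mapsto\mf{w}\cap D$), a detail the paper leaves implicit.
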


The Corollary applies in particular to all universal covers of Salvetti complexes associated with right-angled Artin groups.

It would be nice to use this result to study abstract commensurators of right-angled Artin groups, or to expand the known results on their quasi-isometry classification \cite{Behrstock-Neumann,BJN,Huang-GT2,Margolis-RAAG} and their quasi-i\-so\-met\-ric rigidity properties \cite{BKS,Huang-Kleiner,Huang-IM}. Unfortunately, it appears that these applications would rather require the \emph{extension graph}, which, as discussed above, is highly non-rigid.

For instance, Huang showed that every quasi-isometry between right-angled Artin groups with finite outer automorphism groups induces an isomorphism of their extension graphs \cite[Lemma~4.5]{Huang-GT2}. In the same context, quasi-isometries can fail to induce isomorphisms of contact graphs (see e.g.\ Remark~\ref{QIs incompatible with C}). 

As pointed out by one of the anonymous referees, it is even true that every right-angled Artin group $G$ with $\#{\rm Out}(G)<+\infty$ admits a proper cobounded quasi-action of some group $H$ that fails to induce an $H$--action by automorphisms on the contact graph associated with the Salvetti complex of $G$ (see the proof of \cite[Theorem 6.10]{Huang-Kleiner} along with our \hyperlink{main intro}{Theorem}). It remains unclear to me whether there can be such examples where this issue cannot be resolved by passing to a finite-index subgroup of $H$. 

In relation to this, the following question was suggested by the same anonymous referee (several special cases have been studied in \cite{Huang-IM}):

\begin{quest*} 
Let a group $H$ be quasi-isometric to a right-angled Artin group $G$ with $\#{\rm Out}(G)<+\infty$. Does a finite-index subgroup of $H$ act properly and cocompactly on the universal cover of the Salvetti complex of $G$? 
\end{quest*}

Finally, it is reasonable to wonder whether it really is necessary to require that hyperplanes of $X$ have no extremal vertices in the \hyperlink{main intro}{Theorem}. Davis complexes provide a nice class of counterexamples when we drop this hypothesis. 

Recall that, for a graph $\G$ and a vertex $a\in\G^{(0)}$, the \emph{star} $\St a\cu\G^{(0)}$ is the set of vertices that are either equal to $a$ or joined to $a$ by an edge. The following will be proved in Proposition~\ref{prop recalled}.

\begin{prop*} \hypertarget{Davis intro}{} 
Let $W_{\G}$ be a right-angled Coxeter group with no finite direct factors. Let $Y_{\G}$ denote the universal cover of its Davis complex. Then:
\begin{enumerate}
\item $Y_{\G}$ has no extremal vertices, so $\iota\colon\aut Y_{\G}\ra\aut\C(Y_{\G})$ is injective.
\item $Y_{\G}$ has a hyperplane with extremal vertices if and only if there exist distinct vertices $a,b\in\G^{(0)}$ with $\St a\cu\St b$. In this case, the subgroup $\iota(\aut Y_{\G})<\aut\C(Y_{\G})$ has infinite index.
\item The homomorphism $\rho\colon\aut\C(Y_{\G})\ra S(Y_{\G}^{(0)})$ is injective if and only if there do not exist vertices $a,b\in\G^{(0)}$ with $\St a=\St b$. When $\rho$ is not injective, its kernel is an uncountable torsion subgroup.
\end{enumerate}
\end{prop*}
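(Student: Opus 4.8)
The plan is to reduce everything to two structural features of the Davis complex. First, the link of every vertex of $Y_\G$ is the flag complex on $\G$, and the hyperplane $\mf h$ dual to a generator $s_a$ is itself isomorphic to the Davis complex $Y_{\lk a}$ of $W_{\lk a}$, whose vertex links are in turn the flag complexes on $\lk_\G(a)$. Second, two hyperplanes are adjacent in $\C(Y_\G)$ exactly when their carriers share a vertex; so, writing $V(\mf h)\cu Y_\G^{(0)}$ for the $0$--skeleton of the carrier of $\mf h$, both the adjacency of $\C(Y_\G)$ and the map $\rho$ are determined by the family $\mathcal F=\{V(\mf h)\}$: adjacency reads $V(\mf h)\cap V(\mf h')\ne\emptyset$, the maximal cliques are the sets $C_v=\{\mf h: v\in V(\mf h)\}$, and $\rho$ records the induced permutation of the $C_v$, i.e.\ of $Y_\G^{(0)}$. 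I would record at the outset the identity $V(\mf h)=w\,W_{\St a}$ for the hyperplane dual to $\{w,ws_a\}$ (the dual edges are $\{u,us_a\}$ with $u\in W_{\lk a}$, so their endpoints fill $wW_{\lk a}\langle s_a\rangle=wW_{\St a}$), since every later step is read off from it.

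Part (1) and the characterisation in part (2) are then immediate. A vertex is extremal iff its link is a cone, i.e.\ iff $\G$ (resp.\ $\lk_\G(a)$) has a vertex adjacent to all others, and such a universal vertex exists iff the associated right-angled Coxeter group has a $\Z/2$, hence finite, direct factor. As $W_\G$ has no finite direct factor, $\G$ has no universal vertex, so $Y_\G$ has no extremal vertices and injectivity of $\iota$ follows from part (1) of the Theorem. For hyperplanes, $Y_{\lk a}$ has an extremal vertex iff $\lk_\G(a)$ is a cone over some $b$, and a direct check gives that $\lk_\G(a)$ is a cone over $b$ iff $\St a\cu\St b$ with $a\ne b$ (the apex $b$ lies in $\lk a$, so $b\sim a$, and dominates $\lk a$, which says exactly $\St a\cu\St b$). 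Letting $a$ range over all generators yields the stated equivalence.

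Part (3) I would prove directly from $V(\mf h)=wW_{\St a}$. Two hyperplanes share a carrier vertex set iff two cosets $wW_{\St a}$ and $w'W_{\St b}$ coincide; coset equality forces $W_{\St a}=W_{\St b}$, hence $\St a=\St b$, and for $a=b$ one checks the hyperplanes already agree. Thus distinct hyperplanes have equal carrier vertex sets iff there exist distinct $a,b$ with $\St a=\St b$, and then $\mf h_a,\mf h_b$ (dual to the two commuting generators through a common square) have common carrier $wW_{\St a}=wW_{\St b}$, so their transposition is an automorphism of $\C(Y_\G)$ in $\ker\rho$. Conversely, if $\rho(\phi)=\id$ then $V(\phi(\mf h))=V(\mf h)$ for every $\mf h$, forcing $\phi=\id$; this gives the injectivity criterion. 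Finally these transpositions occur along a $W_\G$--orbit of disjoint twin pairs, infinite because $s_a$ is not universal and so has infinitely many conjugate reflections; as the pairs are disjoint and a hyperplane and its twin have equal carriers, an arbitrary subset of the transpositions preserves contact and defines an automorphism of $\C(Y_\G)$. Hence $\ker\rho$ contains $\prod\Z/2$ over an infinite index set, an uncountable torsion group.

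For the infinite-index statement in part (2) I would split according to whether the dominating pair can be chosen with $\St a=\St b$. If some pair has equal stars, part (3) gives an infinite $\ker\rho$; since $\rho\o\iota=\id$ we have $\ker\rho\cap\iota(\aut Y_\G)=\{1\}$, so distinct kernel elements lie in distinct cosets and $[\aut\C(Y_\G):\iota(\aut Y_\G)]=\infty$. The remaining, genuinely harder case is strict domination $\St a\subsetneq\St b$ with no equal-star pair; here $\rho$ is injective, so the extra automorphisms must act non-trivially, and non-geometrically, on $Y_\G^{(0)}$. The mechanism is that an extremal vertex makes $V(\mf h_a)=wW_{\St a}$ a degenerate carrier and breaks the characterisation of edges as clique-pairs of largest intersection (for $\G=P_4$ the carrier of $\mf h_a$ is a single square, and a diagonal pair $v,v'$ of that square satisfies $C_v\cap C_{v'}=\{\mf h_a,\mf h_b\}=C_v\cap C_{v''}$ for a neighbour $v''$). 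The main obstacle is to promote this degeneracy to an actual automorphism of the family $\mathcal F$ and to produce infinitely many such automorphisms in distinct cosets of $\iota(\aut Y_\G)$: the naive finite-support flip of the square fails to preserve $\mathcal F$, so one must build a more global "partial reflection" adapted to the leaf hyperplane $\mf h_a$ and then spread it over the $W_\G$--orbit, checking via the induced vertex permutations that the resulting elements fall into infinitely many distinct cosets. This last construction is where I expect essentially all the difficulty to concentrate.
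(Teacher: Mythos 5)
Parts~(1) and~(3), and the characterisation of hyperplanes with extremal vertices in part~(2), are correct and follow essentially the paper's route: your coset identity $V(\mf{h})=wW_{\St a}$ is just a concrete form of the paper's observation that two hyperplanes have equal carriers if and only if their labels have equal stars, and your $\prod\Z/2\Z$ sits inside the paper's description of $\ker\rho$ as the full product $\prod S(\mc{I}^0(\mf{w}))$ over the twin-classes. (One small omission in part~(3): you exhibit an uncountable torsion \emph{subgroup} of $\ker\rho$, but the statement asserts the whole kernel is torsion. This does follow from your setup --- any $\phi\in\ker\rho$ preserves each set $V(\mf{h})$, hence permutes each finite twin-class, and these classes have size at most $\dim Y_{\G}$, so every element of $\ker\rho$ has order dividing $(\dim Y_{\G})!$ --- but you should say it.)

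The genuine gap is the infinite-index claim in part~(2) in the case where every star containment $\St a\cu\St b$ is strict, and you say so yourself: you identify the right mechanism (a ``partial reflection'' adapted to the hyperplane labelled $a$) but do not construct it, and this is where the entire content of that half of the proposition lives. For the record, the paper's construction is the following. Fix transverse hyperplanes $\mf{a},\mf{b}$ labelled $a,b$ with $\St a\cu\St b$, and partition $\mscr{W}(Y_{\G})=\mc{A}^+\sqcup\mc{A}^-\sqcup\mc{T}$ according to whether a hyperplane lies in one side of $\mf{a}$, the other side, or is transverse/equal to $\mf{a}$. The star containment forces the carrier of $\mf{a}$ to lie inside the carrier of $\mf{b}$, so every element of $\mc{T}$ is transverse or equal to $\mf{b}$ and is therefore fixed by the reflection $r_{\mf{b}}$. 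Hence the map $\phi$ defined as the identity on $\mc{A}^-\sqcup\mc{T}$ and as $\iota(r_{\mf{b}})$ on $\mc{A}^+\sqcup\mc{T}$ is well defined; since no edge of $\C(Y_{\G})$ joins $\mc{A}^+$ to $\mc{A}^-$ (such hyperplanes are separated by $\mf{a}$), $\phi$ is an automorphism of $\C(Y_{\G})$, and $\rho(\phi)$ acts as $r_{\mf{b}}$ on one halfspace of $\mf{a}$ and as the identity on the other, which is visibly not cubical. Composing such partial reflections based at a sequence of hyperplanes $\mf{a}_n$ labelled $a$ and drifting to infinity produces elements lying in pairwise distinct cosets of $\iota(\aut Y_{\G})$, because the fixed-point sets of the resulting vertex permutations differ unboundedly. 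Note that this single construction also disposes of your first case, so the case split is unnecessary once the partial reflection is available.
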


\medskip
{\bf Acknowledgments.} I am grateful to Max Planck Institute for Mathematics in Bonn for its hospitality and financial support. 

I would like to thank Nir Lazarovich for pointing out that universal covers of Salvetti complexes have uncountable automorphism group (cf.\ Remark~\ref{non-discrete}) and directing me to \cite{Haglund-Paulin}. I would also like to thank Jason Behrstock for some comments on an earlier preprint, Mark Hagen for interesting conversations related to this work, and the anonymous referees for their many useful suggestions.

\section{Preliminaries and examples.}\label{prelims}

All proofs are elementary, but we assume a certain familiarity with the basics of $\CAT$ cube complexes. See for instance \cite{Wise-astra,Sag14} for an introduction.

\medskip
Given a graph $\mc{G}$, we say that two vertices are \emph{adjacent} if they belong to a common edge. Given $x\in\mc{G}^{(0)}$, the \emph{link} $\lk x\cu\mc{G}^{(0)}$ is the subset of vertices adjacent to $x$. The \emph{star} of $x$ is the set $\St x=\lk x\cup\{x\}$. 

An \emph{automorphism} of $\mc{G}$ is a self-bijection of $\mc{G}^{(0)}$ that preserves adjacency of vertices. We denote the automorphism group of $\mc{G}$ by $\aut\mc{G}$. If every vertex in $\mc{G}$ has finite degree, $\aut\mc{G}$ is a second-countable, locally compact topological group with the compact-open topology,

If $\mc{G}_1$ and $\mc{G}_2$ are graphs, their \emph{join} $\mc{G}_1\ast\mc{G}_2$ is obtained by taking the disjoint union $\mc{G}_1\sqcup\mc{G}_2$ and joining by an edge each vertex of $\mc{G}_1$ to each vertex of $\mc{G}_2$. We say that the graph $\mc{G}$ is a \emph{cone} if it is the join of some other graph $\mc{G}'$ and a singleton.

\medskip
Let $X$ be a $\CAT$ cube complex. The group of \emph{(cubical) automorphisms} of $X$ coincides the automorphism group of the $1$--skeleton $X^{(1)}$; we denote it by $\aut X$. A path $\g\cu X$ is a \emph{combinatorial geodesic} if it is contained in $X^{(1)}$ and it is a geodesic for the graph metric on $X^{(1)}$.

We denote by $\mscr{W}(X)$ and $\mscr{H}(X)$, respectively, the set of all hyperplanes and all halfspaces of $X$. Given a hyperplane $\mf{w}\in\mscr{W}(X)$, we refer to the two halfspaces $\mf{h},\mf{h}^*$ bounded by $\mf{w}$ as its \emph{sides}. Given subsets $A,B\cu X$, we denote by $\mscr{W}(A|B)$ the set of hyperplanes $\mf{w}$ such that $A$ and $B$ are contained in opposite sides of $\mf{w}$.

Given $\mf{w}\in\mscr{W}(X)$, the union of all cubes of $X$ that intersect $\mf{w}$ forms a subcomplex $C(\mf{w})\cu X$ known as the \emph{carrier} of $\mf{w}$. We say that $\mf{w}$ is \emph{adjacent} to a vertex $v\in X^{(0)}$ if $\mf{w}$ intersects an edge of $X$ incident to $v$ (equivalently, if $v$ belongs to the carrier of $\mf{w}$). We denote by $\mscr{W}_v\cu\mscr{W}(X)$ the set of hyperplanes adjacent to the vertex $v\in X^{(0)}$.

We say that a hyperplane $\mf{u}$ \emph{contacts} another hyperplane $\mf{w}$ if their carriers intersect. Equivalent conditions are that the set $\mscr{W}(\mf{u}|\mf{w})$ is empty, or that there exists a vertex $v\in X^{(0)}$ such that $\{\mf{u},\mf{w}\}\cu\mscr{W}_v$. 

For each $v\in X^{(0)}$, we redefine\footnote{As a set, $\lk v$ is naturally identified with the link within the graph $X^{(1)}$. However, when $v$ is a vertex of a cube complex, it will be important that $\lk v$ has an additional structure of graph.} the link $\lk v$ as follows. This is the graph that has a vertex for every edge of $X$ incident to $v$, and an edge joining two vertices of $\lk v$ if and only if the corresponding edges of $X$ span a square. Equivalently, the vertex set of $\lk v$ is $\mscr{W}_v$, and we join two hyperplanes by an edge when they are transverse. We say that $v$ is \emph{extremal} if $\lk v$ is a cone.

The intersection between a hyperplane $\mf{w}\cu X$ and a cube $c\cu X$ is always either empty or a mid-cube in $c$. It follows that $\mf{w}$ inherits a decomposition into cubes $\mf{w}\cap c$, where $c$ ranges through all cubes in the carrier $C(\mf{w})$. This gives $\mf{w}$ a structure of $\CAT$ cube complex. 
We are thus allowed to speak of ``vertices of $\mf{w}$'' (which are midpoints of edges of $X$ crossing $\mf{w}$) and of their link in the $\CAT$ cube complex $\mf{w}$.

A subcomplex $C\cu X$ is said to be \emph{convex} if every combinatorial geodesic joining two vertices of $C$ is entirely contained in $C$. The carrier of every hyperplane is a convex subcomplex, and so is the full subcomplex spanned by the vertices contained in any given halfspace. If $C_1,\dots,C_k$ are pairwise-intersecting convex subcomplexes of $X$, we have $C_1\cap\dots\cap C_k\neq\emptyset$. This fact normally goes by the name of Helly's lemma.

Before we go any further, let us give a less ambiguous definition of the \emph{reduced crossing graph} $\C_r(X)$, which was sketched in the introduction.

\begin{defn}\label{C_r defn}
Vertices of $\C_r(X)$ are maximal subsets $\mc{W}\cu\mscr{W}(X)$ with the property that, for any two $\mf{u},\mf{v}\in\mc{W}$, each hyperplane of $X$ is transverse to $\mf{u}$ if and only if it is transverse to $\mf{v}$. Vertices of $\C_r(X)$ corresponding to subsets $\mc{U},\mc{V}\cu\mscr{W}(X)$ are joined by an edge of $\C_r(X)$ if and only if there exist transverse hyperplanes $\mf{u}\in\mc{U}$ and $\mf{v}\in\mc{V}$ (in which case every hyperplane in $\mc{U}$ is transverse to every hyperplane in $\mc{V}$).
\end{defn}

We now obtain Remarks~\ref{non-discrete},~\ref{other graphs don't work} and~\ref{QIs incompatible with C}, which were promised in the introduction. Throughout this discussion, we denote by $X_{\G}$ the universal cover of the Salvetti complex associated with a right-angled Artin group $A_{\G}$.

Recall that the $1$--skeleton of $X_{\G}$ coincides with the standard Cayley graph of $A_{\G}$. It follows that each edge of $X_{\G}$ is labelled by a standard generator of $A_{\G}$, i.e.\ a vertex of the graph $\G$. Opposite edges in a square of $X_{\G}$ always have the same label, hence all edges crossing a given hyperplane $\mf{w}\in\mscr{W}(X_{\G})$ have the same label. We will refer to this element of $\G^{(0)}$ as the \emph{label} of $\mf{w}$.  

\begin{rmk}\label{non-discrete}
It was pointed out to me by Nir Lazarovich that the group $\aut X_{\G}$ is always uncountable, except when $X_{\G}\cong\R^n$. The argument is essentially the one in Theorem~5.12 of \cite{Haglund-Paulin}, but I briefly recall it here.

Assume that the graph $\G$ is not complete and pick vertices $x,y\in\G^{(0)}$ that are not joined by an edge. Let $\phi$ be the automorphism of the group $A_{\G}$ that fixes each standard generator except for the one corresponding to $y$, which is taken to its inverse. Identifying $A_{\G}$ with $X_{\G}^{(0)}$, it is clear that $\phi$ determines a cubical automorphism of $X_{\G}$, which we also denote by $\phi$.

Let $v\in X_{\G}^{(0)}$ be the vertex corresponding to the identity of $A_{\G}$. There are two hyperplanes adjacent to $v$ that are labelled by $x\in\G^{(0)}$. Let $\mf{w}$ be one of them and let $\mf{h},\mf{h}^*$ denote its two sides. Observe that $\phi$ fixes the carrier $C(\mf{w})$ pointwise, and therefore leaves invariant $\mf{h}$ and $\mf{h}^*$.

Let the map $\psi\colon X_{\G}\ra X_{\G}$ be defined as the identity on $\mf{h}^*\cup C(\mf{w})$, and as $\phi$ on $\mf{h}\cup C(\mf{w})$. It follows from the previous observation that $\psi$ is well-defined. Since every edge of $X_{\G}$ is contained in either $\mf{h}^*\cup C(\mf{w})$ or $\mf{h}\cup C(\mf{w})$, this is an automorphism of $X_{\G}$. It is clear that $\psi\neq\id_{X_{\G}}$.

Now, given any finite set $F$ of vertices of  $X_{\G}$, there exists $g\in A_{\G}$ with $gF\cu\mf{h}^*$. Hence the automorphism $g^{-1}\psi g\in\aut X_{\G}-\{\id_{X_{\G}}\}$ fixes $F$ pointwise. We conclude that the locally compact group $\aut X_{\G}$ is non-discrete.

The fact that $\aut X_{\G}$ is uncountable follows from Baire's theorem (see e.g.\ \cite[Remark~2.A.18]{Cornulier-Harpe}).
\end{rmk}

We say that a combinatorial geodesic $\g\cu X_{\G}$ is \emph{standard} if all edges of $\g$ have the same label. Two bi-infinite standard geodesics are at finite Hausdorff distance if and only if they cross the same hyperplanes; in this case, we say that they are \emph{parallel}. Given that $X_{\G}^{(1)}$ coincides with the usual Cayley graph of $A_{\G}$, we will also speak of standard geodesics in $A_{\G}$. 

The \emph{extension graph} $\G^e$ \cite{KK13,KK14,Huang-GT2} has a vertex for every parallelism class of standard geodesics in $X_{\G}$; the vertices determined by standard geodesics $\g_1$ and $\g_2$ are joined by an edge of $\G^e$ if and only if the hyperplanes crossed by $\g_1$ are transverse to the hyperplanes crossed by $\g_2$. 

Note that, in general, we do not have a homomorphism $\aut X_{\G}\ra\aut\G^e$, but only a homomorphism $A_{\G}\ra\aut\G^e$.

Let $d_w$ denote the usual word metric on $A_{\G}$, which coincides with the graph metric on $X_{\G}^{(1)}$ under the identification $A_{\G}=X_{\G}^{(0)}$. Let $d_r$ be the \emph{syllable metric} on $A_{\G}$, as defined e.g.\ in \cite[Section~5.2]{KK14} and \cite[Section~4.3]{Huang-GT2}. More precisely, $d_r$ is the largest metric on $A_{\G}$ satisfying $d_r(x,y)=1$ for all distinct $x,y\in A_{\G}$ that are joined by a standard geodesic.

In order to make Remark~\ref{other graphs don't work} below, we will need the following lemma, which can in large part be deduced from the work of Huang \cite{Huang-GT2}.

\begin{lem}\label{lem for rmk}
Let $\G$ be a finite graph.
\begin{enumerate}
\item There is a natural homomorphism $\is (A_{\G},d_r)\ra\aut\G^e$. This is injective if and only if $\G$ is not a cone\footnote{Note that this requirement seems to have been overlooked in Remark~4.16 of \cite{Huang-GT2}.}.
\item If no two vertices of $\G$ have the same link, every isometry of $(A_{\G},d_w)$ is an isometry of $(A_{\G},d_r)$. Moreover, $\G^e=\C_r(X_{\G})$ in this case.
\end{enumerate}
\end{lem}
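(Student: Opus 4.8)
The plan is to unpack the definitions of the syllable metric $d_r$, the extension graph $\G^e$, and the action of $A_\G$ on both, and then analyze injectivity by understanding which group elements act trivially. For part (1), I would first observe that the natural map $A_\G\to\aut\G^e$ (left multiplication) factors through isometries of $(A_\G,d_r)$: since $d_r$ is left-invariant by construction (it is defined via standard geodesics, which are $A_\G$--equivariant), left translations are $d_r$--isometries, and they clearly act on parallelism classes of standard geodesics, hence on $\G^e$. So the content is to identify $\is(A_\G,d_r)$ with its image in $\aut\G^e$ and to pin down the kernel. The key step is to show that an isometry of $(A_\G,d_r)$ acts trivially on $\G^e$ precisely when it fails to distinguish standard geodesics at the level of their hyperplane-crossings, and to translate the failure of injectivity into a structural statement about $\G$. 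I expect the ``cone'' condition to arise as follows: if $\G=\G'\ast\{s\}$ is a cone with cone point $s$, then the generator corresponding to $s$ is transverse (commutes with) every other generator, so the standard geodesic in the $s$--direction is parallel to all its translates and the element $s$ (or the central-like subgroup it generates) acts trivially on $\G^e$; conversely, if $\G$ is not a cone, one shows every nontrivial isometry moves some parallelism class. The main obstacle here is the converse direction: carefully verifying that non-cone-ness of $\G$ forces injectivity, which amounts to showing that the hyperplane-crossing data of standard geodesics separates points of $\is(A_\G,d_r)$; I would lean on Huang's analysis in \cite{Huang-GT2} for the bulk of this, reconciling it with the overlooked cone hypothesis flagged in the footnote.

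For part (2), I would work directly with the relationship between the word metric $d_w$ and the syllable metric $d_r$. The hypothesis that no two vertices of $\G$ have the same link is exactly the condition under which the combinatorial structure carrying $d_r$ is canonically determined by that carrying $d_w$. Concretely, standard geodesics are defined intrinsically in terms of hyperplanes of $X_\G$ (all edges sharing a label), and hyperplanes are metric-graph data; the point is that under the ``distinct links'' hypothesis, a $d_w$--isometry must permute standard geodesics, since it permutes hyperplanes and respects transversality, and the label of a hyperplane can be reconstructed from the transversality pattern of its link precisely when distinct generators have distinct links. Thus any $d_w$--isometry sends standard geodesics to standard geodesics, hence pairs at $d_r$--distance $1$ to pairs at $d_r$--distance $1$, and since $d_r$ is the largest metric with this normalization, it is a $d_r$--isometry. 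The first obstacle is making rigorous the claim that a $d_w$--isometry respects the labelling of hyperplanes; the clean way is to characterize the label of $\mf{w}$ purely in terms of the isomorphism type of $\lk\mf{w}$ (as a labelled object) and invoke the distinct-links hypothesis to make this characterization injective on labels.

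Finally, for the identification $\G^e=\C_r(X_\G)$ under the same hypothesis, I would compare the two definitions vertex-by-vertex and edge-by-edge. Vertices of $\C_r(X_\G)$ are maximal sets of hyperplanes sharing the same transversality pattern (Definition~\ref{C_r defn}), while vertices of $\G^e$ are parallelism classes of standard geodesics, i.e.\ maximal sets of mutually parallel (equal-crossing) standard geodesics, equivalently sets of hyperplanes crossed by a common standard geodesic. The crux is to show these two partitions of a relevant hyperplane collection coincide when no two vertices of $\G$ share a link: parallel standard geodesics cross exactly the same hyperplanes, and two hyperplanes lie in a common parallelism class iff they have the same transversality pattern \emph{and} the same label, with the distinct-links hypothesis ensuring that the transversality pattern already determines the label, collapsing the two equivalence relations into one. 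The edge conditions match because both are defined by transversality of the associated hyperplanes. I expect the main obstacle in this last part to be the careful bookkeeping that ``same transversality pattern among all hyperplanes'' and ``crossed by a common standard geodesic'' define the same equivalence exactly under the stated hypothesis, and I would isolate this as the one nontrivial equivalence, everything else being a direct comparison of definitions.
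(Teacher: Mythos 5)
The main gap is in part~(1), precisely at the step you defer to Huang: the injectivity of $\is(A_{\G},d_r)\ra\aut\G^e$ when $\G$ is not a cone. As the footnote in the statement records, this is the point that is \emph{not} correctly covered by Remark~4.16 of \cite{Huang-GT2}, so ``leaning on Huang's analysis'' for it is not an option; Huang's results are only invoked (in the paper) for the \emph{existence} of the homomorphism, i.e.\ the fact that every syllable-metric isometry permutes parallelism classes of standard geodesics and preserves adjacency of the corresponding vertices of $\G^e$. The injectivity requires a new argument, and the paper's is short but is genuinely an idea your sketch lacks: if $\phi\in\is(A_{\G},d_r)$ acts trivially on $\G^e$, fix $g\in A_{\G}$ and, for each $x\in\G^{(0)}$, let $\g_x$ be the standard geodesic through $g$ in the direction $x$. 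Then $\bigcap_x\g_x=\{g\}$, $\bigcap_x\phi(\g_x)=\{\phi(g)\}$, and each $\phi(\g_x)$ is parallel to $\g_x$, so any combinatorial geodesic from $g$ to $\phi(g)$ crosses only hyperplanes labelled by elements of $\St x$ --- simultaneously for every $x$. Since $\G$ is not a cone, $\bigcap_{x\in\G^{(0)}}\St x=\emptyset$, forcing $\phi(g)=g$. Your proposal correctly flags this converse as ``the main obstacle'' but supplies no mechanism for it, and in particular does not locate where the non-cone hypothesis actually enters. Your kernel element in the cone case (left multiplication by the cone vertex) is a correct witness of non-injectivity, though the paper exhibits a much larger kernel, namely arbitrary permutations of the $\Z$--factor of $A_{\G}=A_{\Delta}\x\Z$.

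The rest follows essentially the paper's route, with one place to tighten. For part~(2) the paper does not reconstruct hyperplane labels from global transversality patterns; it argues locally in vertex links: once no two vertices of $\G$ share a link, the two vertices $x^{\pm}$ of $\lk v$ determined by a generator $x$ are the only pair of $\lk v$ with a common link, so a cubical automorphism sends the two $x$--labelled edges at $v$ to two equally labelled edges at $\phi(v)$, and propagating this along a standard geodesic shows its image is standard. Your global ``transversality pattern of the hyperplane'' formulation would need to be converted into exactly this local statement before you can say anything about consecutive edges of the image geodesic, and as written it is somewhat circular (the pattern is described in terms of labels you are trying to recover). The identification $\G^e=\C_r(X_{\G})$ is treated in your proposal at the same level of detail as in the paper --- both reduce it to the coincidence of two equivalence relations on hyperplanes and declare the verification routine --- so there is nothing to object to there beyond what one could also ask of the original.
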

\begin{proof}
We begin with part~(2). Consider a vertex $v\in X_{\G}^{(0)}$, a vertex $x\in\G^{(0)}$, and the two vertices $x^{\pm}\in(\lk v)^{(0)}$ determined by $x$. If no two vertices of $\G$ have the same link, no vertex of $\lk v-\{x^{\pm}\}$ can have the same link as $x^+$ and $x^-$. It follows that every element of $\aut X_{\G}$ takes standard geodesics to standard geodesics. Identifying $\is (A_{\G},d_w)$ with $\aut X_{\G}$, we deduce that every $\phi\in\is (A_{\G},d_w)$ is $1$--Lipschitz with respect to $d_r$. Since $\phi^{-1}$ must also be $1$--Lipschitz for $d_r$, this shows that $\phi\in\is (A_{\G},d_r)$.

Finally, it is easy to see that, since no two vertices of $\G$ have the same link, the projection $\C_{\pf}(X_{\G})\twoheadrightarrow\C_r(X_{\G})$ identifies two vertices if and only if there exists a standard geodesic crossing the corresponding hyperplanes. It follows that $\C_r(X_{\G})$ coincides with $\G^e$ in this case.

Regarding part~(1), Huang showed that every element of $\is (A_{\G},d_r)$ takes standard geodesics to standard geodesics, as unparametrised sets (see Remark~4.16 and the proof of Corollary~4.15 in \cite{Huang-GT2}). Since vertices of $\G^e$ correspond to families of hyperplanes transverse to standard geodesics, every isometry of the syllable metric induces a permutation of the vertices of $\G^e$. Huang also showed (\emph{loc.\ cit.}) that such permutations preserve adjacency of vertices of $\G^e$. We thus obtain a homomorphism $\is (A_{\G},d_r)\ra\aut\G^e$.

If $\G$ is the cone over a subgraph $\Delta$, we have $A_{\G}=A_{\Delta}\x\Z$. For any permutation $\s\colon\Z\ra\Z$, the map $(g,n)\mapsto (g,\s(n))$ is an isometry of the syllable metric on $A_{\Delta}\x\Z$, but it maps to the identity in $\aut\G^e$. 

Conversely, let us show that, when $\G$ is not a cone, the homomorphism $\is (A_{\G},d_r)\ra\aut\G^e$ is injective. In other words, given $\phi\in\is (A_{\G},d_r)$ taking each standard geodesic to a standard geodesic at finite Hausdorff distance, we need to show that $\phi$ fixes each element of $A_{\G}$. 

Consider an element $g\in A_{\G}$ and let $\{\g_x\}_{x\in\G^{(0)}}$ be the collection of all standard geodesics containing $g$. Observe that $\bigcap_x\g_x=\{g\}$, that each $\phi(\g_x)$ is a standard geodesic at finite Hausdorff distance from $\g_x$, and that $\bigcap_x\phi(\g_x)=\{\phi(g)\}$. Let $\alpha$ be a combinatorial geodesic joining $g$ and $\phi(g)$ in $X_{\G}$. Since $\alpha$ joins a point of $\g_x$ to a point of $\phi(\g_x)$, every edge crossed by $\alpha$ must be labelled by an element of $\St x\cu\G^{(0)}$. However, since $\G$ is not a cone, we have $\bigcap_{x\in\G^{(0)}}\St x=\emptyset$. In conclusion, $\alpha$ does not cross any edges, and we have $\phi(g)=g$ for all $g\in A_{\G}$.
\end{proof}

\begin{rmk}\label{other graphs don't work}
Suppose that $\G$ is not a cone and that no two vertices of $\G$ have the same link. We show here that, in this case, the images of the two natural maps $\iota_{\pf}\colon\aut X_{\G}\ra\aut\C_{\pf}(X_{\G})$ and $\iota_r\colon\aut X_{\G}\ra\aut\C_r(X_{\G})$ have uncountable index.

It follows from Lemma~\ref{lem for rmk} that we have a commutative diagram:
\[ 
\begin{tikzcd}
\is (A_{\G},d_w) \arrow[hookrightarrow]{r} & \is (A_{\G},d_r) \arrow[hookrightarrow]{r} & \aut \G^e \\
\aut X_{\G} \arrow[u,equals] \arrow[hookrightarrow]{rr}{\iota_r} & & \aut\C_r(X_{\G}). \arrow[u,equals]
\end{tikzcd}
\]
Now, the argument in \cite[Example~4.14]{Huang-GT2} shows that the embedding $\is (A_{\G},d_w)\hookrightarrow\is (A_{\G},d_r)$ is very far from being surjective. More precisely, for every standard geodesic $\g\cu X_{\G}$ and every permutation $\s$ of its vertex set $\g^{(0)}\cu X_{\G}^{(0)}=A_{\G}$, we can construct an element of $\is (A_{\G},d_r)$ that preserves the set $\g^{(0)}$, and acts on it as $\s$. 

On closer inspection, this corresponds to a copy of the infinite symmetric group $S(\Z)<\is (A_{\G},d_r)$ that intersects the subgroup $\is (A_{\G},d_w)$ in an infinite dihedral subgroup. Thus, $\is (A_{\G},d_w)<\is (A_{\G},d_r)$ has uncountable index, and so does $\aut X_{\G}<\aut\C_r(X_{\G})$.

Finally, observe that the quotient projection $\C_{\pf}(X_{\G})\twoheadrightarrow\C_r(X_{\G})$ induces a surjective\footnote{This holds for arbitrary $\CAT$ cube complexes $X$, as long as the fibres of the projection $\C_{\pf}(X)\twoheadrightarrow\C_r(X)$ all have the same cardinality.} homomorphism $\pi_r\colon\aut\C_{\pf}(X_{\G})\twoheadrightarrow\aut\C_r(X_{\G})$. This yields the commutative diagram:
\[ 
\begin{tikzcd}
\aut X_{\G}  \arrow[hookrightarrow]{rr}{\iota_{\pf}} \arrow[hookrightarrow]{dr}{\iota_r} &  & \aut\C_{\pf}(X_{\G}) \arrow[twoheadrightarrow,swap]{dl}{\pi_r} \\
 & \aut\C_r(X_{\G}). &
 \end{tikzcd}
\]
We conclude that $\aut X_{\G}<\aut\C_{\pf}(X_{\G})$ also has uncountable index.
\end{rmk}

\begin{rmk}\label{QIs incompatible with C}
There are many examples of quasi-isometries of $X_{\G}$ that fail to induce an automorphism of the contact graph $\C_{\G}:=\mc{C}(X_{\G})$.

A first example is provided by the argument in Remark~\ref{other graphs don't work} (also appearing in \cite[Section~11]{BKS} and \cite[Example~4.14]{Huang-GT2}). Under its assumptions, we have seen that every permutation of the vertex set of the standard geodesic $\g$ gives rise to an isometry of $(A_{\G},d_r)$. Considering bi-Lipschitz permutations of $\g^{(0)}$, the resulting isometries of $d_r$ are quasi-isometries of $(A_{\G},d_w)$ and most of them will not induce automorphisms of $\C_{\G}$.

For a second example, suggested by one of the anonymous referees, let $\Lambda$ be a pentagon. Consider the homomorphism $\tau\colon A_{\Lambda}\ra\Z/2\Z$ with $\tau(v)=1$ for some vertex $v\in\Lambda$ and $\tau(w)=0$ for all other $w\in\Lambda$. Then $\ker\tau$ is isomorphic to $A_{\Lambda'}$, where $\Lambda'$ is the graph obtained by glueing two copies of $\Lambda$ along $\St v$. Since $\ker\tau$ has index $2$ in $A_{\Lambda}$, there is a natural proper cobounded quasi-action of $A_{\Lambda}$ on $X_{\Lambda'}$. It is not hard to see that every hyperplane of $X_{\Lambda'}$ is taken within bounded distance of another hyperplane, so we obtain an action of $A_{\Lambda}$ on $\mc{C}_{\Lambda'}^{(0)}$ by bijections. However, the bijection associated with $v\in A_{\Lambda}$ does not extend to an isomorphism of $\mc{C}_{\Lambda'}$.

Note that it is not clear if the quasi-isometries in the first example can take part in a proper cobounded quasi-action on $X_{\G}$. In the second example, we do have a geometric quasi-action, but $\#{\rm Out}(A_{\Lambda'})=+\infty$. Moreover, the restriction to $\ker\tau<A_{\Lambda}$ of the quasi-action on $A_{\Lambda'}$ does in fact induce an action by automorphisms on $\mc{C}_{\Lambda'}$. As mentioned in the Introduction, other examples are provided by \cite[Theorem~6.10]{Huang-Kleiner}, although, again, all issues disappear in a finite-index subgroup.
\end{rmk}

\section{Proof of the \protect\hyperlink{main intro}{Theorem}.}

Let $X$ be a $\CAT$ cube complex with contact graph $\C=\C(X)$, as defined in the introduction. We identify subsets of $\C$ with their intersection with $\C^{(0)}$ and with the corresponding subset of $\mscr{W}(X)$.

Our first goal is to establish a correspondence between vertices of $X$ and maximal cliques in $\C$.

\begin{lem}\label{structure of cliques}
\begin{enumerate}
\item[]
\item For every finite clique $C\cu\C$, there exists $v\in X^{(0)}$ with $C\cu\mscr{W}_v$.
\item If $C\cu\C$ is a maximal finite clique, there is $v\in X^{(0)}$ with $C=\mscr{W}_v$. 
\item If $X$ is uniformly locally finite, the cliques of $\mc{C}$ are uniformly finite.
\end{enumerate}
\end{lem}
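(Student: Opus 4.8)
The plan is to obtain part~(1) directly from Helly's lemma and then to bootstrap parts~(2) and~(3) from it. For part~(1), I would take a finite clique $C=\{\mf{w}_1,\dots,\mf{w}_k\}\cu\C$. By the definition of the contact graph, each pair $\mf{w}_i,\mf{w}_j$ contacts, so (by the equivalent conditions recorded in the preliminaries) their carriers $C(\mf{w}_i)$ and $C(\mf{w}_j)$ intersect. The carriers are convex subcomplexes, so $C(\mf{w}_1),\dots,C(\mf{w}_k)$ form a family of pairwise-intersecting convex subcomplexes, and Helly's lemma gives $\bigcap_i C(\mf{w}_i)\neq\emptyset$. This intersection is a nonempty subcomplex, hence it contains a vertex $v\in X^{(0)}$; since $v$ lies in every carrier, $\mf{w}_i\in\mscr{W}_v$ for all $i$, i.e.\ $C\cu\mscr{W}_v$.

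For part~(2), the key observation is that $\mscr{W}_v$ is itself a clique of $\C$, since any two hyperplanes adjacent to $v$ have carriers containing $v$ and therefore contact. Given a maximal finite clique $C$, part~(1) provides $v$ with $C\cu\mscr{W}_v$. Were this inclusion proper, any $\mf{w}\in\mscr{W}_v\setminus C$ would enlarge $C$ to the strictly bigger finite set $C\cup\{\mf{w}\}$, which is still a clique because it is contained in the clique $\mscr{W}_v$; this contradicts the maximality of $C$. Hence $C=\mscr{W}_v$, and one notes in passing that this forces $\mscr{W}_v$ to be finite.

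For part~(3), under uniform local finiteness there is a uniform bound $N$ with $|\mscr{W}_v|\le N$ for every $v\in X^{(0)}$ (the set $\mscr{W}_v$ being in bijection with the edges of $X$ incident to $v$). I would then argue that an arbitrary clique $C\cu\C$, finite or not, has at most $N$ vertices: each finite subset $C'\cu C$ is a finite clique, so by part~(1) it sits inside some $\mscr{W}_{v'}$ and thus satisfies $|C'|\le N$; since every finite subset of $C$ is bounded by $N$, so is $C$ itself. This bounds all cliques of $\C$ uniformly.

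The essential ingredient throughout is Helly's lemma combined with the characterization of contact as intersection of carriers; the only small point requiring attention is that the nonempty intersection of carriers, being a subcomplex, must contain a vertex. I do not anticipate a genuine obstacle beyond this, since parts~(2) and~(3) are light combinatorial consequences of part~(1).
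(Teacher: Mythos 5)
Your argument is correct, and for part~(1) it takes a genuinely shorter route than the paper. The paper applies Helly's lemma only to \emph{halfspaces}: for each $\mf{w}\in C$ it chooses the side containing all members of $C$ disjoint from $\mf{w}$, obtains a vertex $w$ in the intersection of these halfspaces, and then runs a descent argument on the total distance $d(w)$ from $w$ to the carriers, replacing $w$ by an adjacent vertex until $d=0$. You instead apply Helly's lemma directly to the carriers themselves, using that contact is equivalent to the carriers intersecting (indeed sharing a vertex, by the equivalent conditions recorded in the preliminaries) and that carriers are convex subcomplexes; this eliminates the descent step entirely. Since the paper explicitly states the Helly property for arbitrary pairwise-intersecting convex subcomplexes, not just halfspaces, your version is legitimate as written, and your remark that the nonempty intersection, being a subcomplex, contains a vertex closes the only potential gap. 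The trade-off is that the paper's proof only invokes the more elementary halfspace form of Helly, at the cost of the extra inductive argument. Your treatments of parts~(2) and~(3) — maximality forces $C=\mscr{W}_v$ because $\mscr{W}_v$ is itself a clique, and an infinite clique would contain a finite subclique exceeding the uniform bound on $\#\mscr{W}_v$ — are exactly the deductions the paper leaves implicit when it says these parts follow immediately from part~(1).
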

\begin{proof}
For every vertex $v\in X^{(0)}$, the subset $\mscr{W}_v\cu\C$ is a clique. Parts~(2) and~(3) thus follow immediately from part~(1), which we now prove.

Let $C\cu\C$ be a finite clique. For every hyperplane $\mf{w}\in C$, at most one side of $\mf{w}$ can contain a hyperplane in $C$ disjoint from $\mf{w}$. Picking this side for every $\mf{w}\in C$, or just any side if $\mf{w}$ is transverse to all other hyperplanes in $C$, we obtain a finite collection of pairwise-intersecting halfspaces $\mc{H}\cu\mscr{H}(X)$. By Helly's lemma, there exists $w\in X^{(0)}$ lying in all elements of $\mc{H}$. 

Let $d(w)$ denote the sum of the distances from $w$ to the carriers of the hyperplanes in $C$, using the graph metric of $X^{(1)}$. Thus, $d(w)=0$ if and only if $C\cu\mscr{W}_v$. If $d(w)>0$, there exist hyperplanes $\mf{u}\in C-\mscr{W}_w$ and $\mf{v}\in\mscr{W}_w\cap\mscr{W}(w|\mf{u})$. Let $w'\in X^{(0)}$ be the vertex with $\mscr{W}(w|w')=\{\mf{v}\}$. No hyperplane in $C$ can be contained in the side of $\mf{v}$ that contains $w$, or they would not contact $\mf{u}$. It follows that $d(w')<d(w)$ and, iterating this procedure finitely many times, we obtain a vertex $v\in X^{(0)}$ with $d(v)=0$. This yields part~(1). 
\end{proof}

\begin{rmk}
Locally finite $\CAT$ cube complexes need not be $\om$--di\-men\-sio\-nal (i.e.\ they can contain infinite families of pairwise-transverse hyperplanes). In particular, there exist locally finite cube complexes whose contact, crossing, and reduced crossing graphs all contain infinite cliques. Thus, the hypothesis in part~(3) of Lemma~\ref{structure of cliques} cannot be weakened.
\end{rmk}

\begin{lem}\label{cone links}
Consider a vertex $v\in X^{(0)}$ with $\#\mscr{W}_v<+\infty$.
\begin{enumerate}
\item There exists $w\neq v$ with $\mscr{W}_v\cu\mscr{W}_w$ if and only if $\lk v$ is a cone.
\item In particular, if $\lk v$ is not a cone, the clique $\mscr{W}_v\cu\C$ is maximal and there does not exist another vertex $w$ with $\mscr{W}_v=\mscr{W}_w$.
\end{enumerate}
\end{lem}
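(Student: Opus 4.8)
The plan is to establish part~(1) first, as part~(2) follows quickly from it together with Helly's lemma (Lemma~\ref{structure of cliques}). Throughout I use the standard fact that two distinct hyperplanes $\mf{a},\mf{b}$ are transverse if and only if all four intersections of a side of $\mf{a}$ with a side of $\mf{b}$ contain a vertex.

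For the forward implication of~(1), suppose $\lk v$ is a cone, with cone point a hyperplane $\mf{w}\in\mscr{W}_v$ transverse to every other element of $\mscr{W}_v$. Let $w\neq v$ be the endpoint of the edge at $v$ crossing $\mf{w}$, so that $\mscr{W}(v|w)=\{\mf{w}\}$. I claim $\mscr{W}_v\cu\mscr{W}_w$: clearly $\mf{w}\in\mscr{W}_w$, since $w$ lies in its carrier; and for $\mf{u}\in\mscr{W}_v$ with $\mf{u}\neq\mf{w}$, the edge at $v$ crossing $\mf{u}$ and the edge $vw$ span a square (as $\mf{u}$ and $\mf{w}$ are transverse), so $w$ lies in the carrier of $\mf{u}$ and hence $\mf{u}\in\mscr{W}_w$.

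The reverse implication is the main step. Assume $\mscr{W}_v\cu\mscr{W}_w$ for some $w\neq v$, and let $\mf{w}_1$ be the first hyperplane crossed by a combinatorial geodesic from $v$ to $w$, so $\mf{w}_1\in\mscr{W}_v\cap\mscr{W}(v|w)$. I will show that $\mf{w}_1$ is transverse to every other $\mf{u}\in\mscr{W}_v$, exhibiting $\lk v$ as a cone with cone point $\mf{w}_1$. Write $\mf{w}_1^+$ for the side of $\mf{w}_1$ containing $v$ (so $w$ lies in $\mf{w}_1^-$) and $\mf{u}^+$ for the side of $\mf{u}$ containing $v$. I populate the four quadrants cut out by $\mf{u}$ and $\mf{w}_1$: the corner $\mf{u}^+\cap\mf{w}_1^+$ contains $v$; the edge at $v$ crossing $\mf{u}$ (which exists since $\mf{u}\in\mscr{W}_v$) reaches a vertex of $\mf{u}^-\cap\mf{w}_1^+$, as it crosses $\mf{u}$ but not $\mf{w}_1$; symmetrically the edge at $v$ crossing $\mf{w}_1$ reaches a vertex of $\mf{u}^+\cap\mf{w}_1^-$. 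For the remaining corner I finally use the hypothesis $\mf{u}\in\mscr{W}_w$: the two endpoints of the edge at $w$ crossing $\mf{u}$ both lie in $\mf{w}_1^-$ (that edge does not cross $\mf{w}_1$) and on opposite sides of $\mf{u}$, so one of them lies in $\mf{u}^-\cap\mf{w}_1^-$. All four corners are non-empty, so $\mf{u}$ and $\mf{w}_1$ are transverse.

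Part~(2) is then formal. If some $w\neq v$ satisfied $\mscr{W}_v=\mscr{W}_w$, then $\mscr{W}_v\cu\mscr{W}_w$ and part~(1) would make $\lk v$ a cone, contrary to hypothesis; this gives the uniqueness statement. For maximality, suppose $\mscr{W}_v$ were properly contained in a clique of $\C$, so that some $\mf{z}\notin\mscr{W}_v$ contacts every element of $\mscr{W}_v$. Then $\mscr{W}_v\cup\{\mf{z}\}$ is a finite clique, and Lemma~\ref{structure of cliques}(1) produces a vertex $w$ with $\mscr{W}_v\cup\{\mf{z}\}\cu\mscr{W}_w$; since $\mf{z}\in\mscr{W}_w\setminus\mscr{W}_v$ we have $w\neq v$, and part~(1) again forces $\lk v$ to be a cone, a contradiction. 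I expect the only real work to be the reverse implication of~(1), and within it the verification that the fourth quadrant is non-empty, which is precisely where the containment $\mscr{W}_v\cu\mscr{W}_w$ enters.
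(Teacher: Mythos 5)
Your proof is correct and follows essentially the same route as the paper's: the key step in both is to take a hyperplane of $\mscr{W}_v$ separating $v$ from $w$ and show it must be transverse to every other element of $\mscr{W}_v$ (your explicit four-quadrant verification is just a more hands-on version of the paper's "cannot separate $w$ from any element of $\mscr{W}_v$" argument), and the forward direction and the deduction of part~(2) via Lemma~\ref{structure of cliques}(1) match the paper exactly.
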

\begin{proof}
By part~(1) of Lemma~\ref{structure of cliques}, $\mscr{W}_v$ is a maximal clique if and only if there does not exist $w\in X^{(0)}$ with $\mscr{W}_v\subsetneq\mscr{W}_w$. Part~(2) thus follows from part~(1).

Suppose that $w\neq v$ is a vertex with $\mscr{W}_v\cu\mscr{W}_w$. Let $\mf{w}\in\mscr{W}_v$ be a hyperplane separating $w$ and $v$. Since $\mf{w}$ cannot separate $w$ from any element of $\mscr{W}_v$, it must be transverse to all elements of $\mscr{W}_v$. Hence $\lk v$ is a cone.

Conversely, if $\lk v$ is a cone, there exists a hyperplane $\mf{w}\in\mscr{W}_v$ that is transverse to all other hyperplanes adjacent to $v$. Denoting by $w\in X^{(0)}$ the vertex with $\mscr{W}(v|w)=\{\mf{w}\}$, we have $\mscr{W}_v\cu\mscr{W}_w$.
\end{proof}

Recall from the introduction that the action $\aut X\acts\mscr{W}(X)$ results in a natural homomorphism $\iota\colon\aut X\ra\aut\C$. Lemmas~\ref{structure of cliques} and~\ref{cone links} immediately yield the first part of the \hyperlink{main intro}{Theorem}:

\begin{cor}\label{the map rho}
Let $X$ be uniformly locally finite, with no extremal vertices. There exists a natural one-to-one correspondence between vertices of $X$ and maximal cliques of $\mc{C}$. This induces a homomorphism $\rho\colon\aut\C\ra S(X^{(0)})$ satisfying $\rho\o\iota=\id_{\aut X}$.
\end{cor}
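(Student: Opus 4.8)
The plan is to extract everything from the two preceding lemmas. First I would establish the bijection. By part~(1) of Lemma~\ref{structure of cliques}, every finite clique of $\C$ is contained in some $\mscr{W}_v$, and by part~(2) every maximal finite clique equals $\mscr{W}_v$ for some $v\in X^{(0)}$. Since $X$ is uniformly locally finite, part~(3) guarantees that all cliques are finite, so ``maximal clique'' and ``maximal finite clique'' coincide and the assignment $C\mapsto v$ is defined on all maximal cliques. In the other direction, I would send $v\mapsto\mscr{W}_v$. The hypothesis that $X$ has no extremal vertices means no $\lk v$ is a cone, so Lemma~\ref{cone links}(2) tells me that each $\mscr{W}_v$ is a maximal clique and that distinct vertices give distinct cliques (no $w\neq v$ with $\mscr{W}_v=\mscr{W}_w$). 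Thus $v\mapsto\mscr{W}_v$ is an injection from $X^{(0)}$ into maximal cliques, and the two assignments are mutually inverse, giving the claimed one-to-one correspondence.

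Next I would define $\rho$. Any $\phi\in\aut\C$ permutes the vertex set $\mscr{W}(X)$ preserving adjacency, hence carries cliques to cliques and maximal cliques to maximal cliques. Composing with the bijection (maximal cliques)$\leftrightarrow X^{(0)}$ on both sides, $\phi$ induces a permutation $\rho(\phi)$ of $X^{(0)}$: concretely, $\rho(\phi)(v)$ is the unique vertex $w$ with $\mscr{W}_w=\phi(\mscr{W}_v)$. That $\rho$ is a homomorphism follows because $\phi\mapsto(\text{action on maximal cliques})$ is a homomorphism and conjugating by a fixed bijection preserves composition; I would note the order of composition is respected since $\rho(\phi)$ is read off the functorial action on cliques.

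Finally I would verify $\rho\o\iota=\id_{\aut X}$. For $g\in\aut X$, the automorphism $\iota(g)$ is the action of $g$ on $\mscr{W}(X)$, and since $g$ is cubical it satisfies $g\cdot\mscr{W}_v=\mscr{W}_{gv}$ (a hyperplane adjacent to $v$ is carried to a hyperplane adjacent to $gv$, and this is a bijection of adjacency sets). Hence $\iota(g)$ sends the maximal clique $\mscr{W}_v$ to $\mscr{W}_{gv}$, so by definition $\rho(\iota(g))(v)=gv$ for every $v\in X^{(0)}$. Since an element of $\aut X$ is determined by its action on vertices, $\rho(\iota(g))=g$, i.e.\ $\rho\o\iota=\id_{\aut X}$; in particular $\iota$ is injective.

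The argument is essentially bookkeeping once the two lemmas are in hand, so there is no single hard step. The one point deserving care is the interaction between finiteness and maximality: I must use uniform local finiteness (part~(3)) to know that an arbitrary maximal clique is automatically finite, so that the conclusions of parts~(1)--(2) of Lemma~\ref{structure of cliques}, which are stated for \emph{finite} cliques, apply to it. I would state this reduction explicitly rather than gloss over it.
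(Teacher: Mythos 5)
Your proposal is correct and is exactly the argument the paper intends: the paper states that Corollary~\ref{the map rho} follows ``immediately'' from Lemmas~\ref{structure of cliques} and~\ref{cone links}, and your write-up simply makes that deduction explicit (including the correct observation that uniform local finiteness is what lets you apply the statements about \emph{finite} cliques to arbitrary maximal cliques). No gaps.
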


We now proceed to study when the homomorphism $\rho$ is injective.

\begin{defn}\label{I(w) defn}
For a hyperplane $\mf{w}\in\mscr{W}(X)$, we denote by $\mc{I}(\mf{w})$ the intersection of all sets $\mscr{W}_v$ that contain $\mf{w}$. Let moreover $\mc{I}^0(\mf{w})\cu\mc{I}(\mf{w})$ be the subset of those hyperplanes $\mf{u}\in\mc{I}(\mf{w})$ for which $\mf{w}\in\mc{I}(\mf{u})$.
\end{defn}

Note that $\mf{w}\in\mc{I}^0(\mf{w})\cu\mc{I}(\mf{w})$. Recall that a vertex $v\in X^{(0)}$ lies in the carrier of $\mf{w}$ if and only if $\mf{w}\in\mscr{W}_v$. Thus, we have $\mf{u}\in\mc{I}(\mf{w})$ if and only if the carrier of $\mf{w}$ is contained in the carrier of $\mf{u}$. In particular, $\mf{u}\in\mc{I}^0(\mf{w})$ if and only if $\mf{u}$ and $\mf{w}$ have the same carrier. 

\begin{rmk}\label{I(w) and automorphisms}
Let $X$ be uniformly locally finite, with no extremal vertices. Since the subsets $\mscr{W}_v\cu\mscr{W}(X)$ are exactly the maximal cliques of $\C$, we have $\phi(\mc{I}(\mf{w}))=\mc{I}(\phi(\mf{w}))$ for all $\mf{w}\in\mscr{W}(X)$ and $\phi\in\aut\C$. Moreover:
\begin{align*}
\mf{u}\in\mc{I}^0(\mf{w}) &\LRa \mf{u}\in\mc{I}(\mf{w}) \text{ \& } \mf{w}\in\mc{I}(\mf{u}) \\
&\LRa \phi(\mf{u})\in\phi(\mc{I}(\mf{w})) \text{ \& } \phi(\mf{w})\in\phi(\mc{I}(\mf{u})) \\
&\LRa \phi(\mf{u})\in\mc{I}(\phi(\mf{w})) \text{ \& } \phi(\mf{w})\in\mc{I}(\phi(\mf{u})) \LRa \phi(\mf{u})\in\mc{I}^0(\phi(\mf{w})).
\end{align*}
Hence $\phi(\mc{I}^0(\mf{w}))=\mc{I}^0(\phi(\mf{w}))$ as well.
\end{rmk}

\begin{lem}\label{I(w) lemma}
Consider $\mf{w},\mf{u}\in\mscr{W}(X)$.
\begin{enumerate}
\item We have $\mf{u}\in\mc{I}(\mf{w})$ if and only if $\mf{u}$ is transverse to $\mf{w}$ and to all other hyperplanes transverse to $\mf{w}$. In particular, $\#\mc{I}(\mf{w})\leq\dim X$.
\item If $\mf{u}\in\mc{I}^0(\mf{w})$, then $\mf{u}$ and $\mf{w}$ have exactly the same star in $\C$.
\item We have $\mf{u}\in\mc{I}^0(\mf{w})$ if and only if $\mc{I}^0(\mf{u})=\mc{I}^0(\mf{w})$. In particular, the sets $\mc{I}^0(\mf{w})$ provide a partition of $\C^{(0)}$.
\end{enumerate}
\end{lem}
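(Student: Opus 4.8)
Parts~(2) and~(3) are quick consequences of the observation made right after Definition~\ref{I(w) defn}, namely that $\mf{u}\in\mc{I}^0(\mf{w})$ if and only if $\mf{u}$ and $\mf{w}$ have the same carrier, $C(\mf{w})=C(\mf{u})$. For part~(2), recall that a hyperplane $\mf{h}$ contacts $\mf{w}$ exactly when $C(\mf{h})\cap C(\mf{w})\neq\emptyset$; since $C(\mf{u})=C(\mf{w})$, the hyperplanes contacting $\mf{u}$ are precisely those contacting $\mf{w}$, and $\mf{u},\mf{w}$ contact each other, so $\St\mf{u}=\St\mf{w}$ in $\C$. For part~(3), the relation ``$C(\mf{u})=C(\mf{w})$'' is visibly an equivalence relation on $\mscr{W}(X)$, whose class through $\mf{w}$ is exactly $\mc{I}^0(\mf{w})$; hence $\mf{u}\in\mc{I}^0(\mf{w})\LRa\mc{I}^0(\mf{u})=\mc{I}^0(\mf{w})$, and the classes partition $\C^{(0)}$.

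The substance is part~(1), and the plan is to translate the carrier inclusion $C(\mf{w})\cu C(\mf{u})$ (equivalent to $\mf{u}\in\mc{I}(\mf{w})$, again by the remark after Definition~\ref{I(w) defn}) into the transversality condition; the characterization is understood for $\mf{u}\neq\mf{w}$, the case $\mf{u}=\mf{w}$ being trivial since $\mf{w}\in\mc{I}(\mf{w})$. The argument rests on two elementary facts. Fact~(A): if $\mf{a}\neq\mf{b}$ are hyperplanes and some edge crossing $\mf{b}$ has both endpoints in $C(\mf{a})$, then $\mf{a}$ and $\mf{b}$ are transverse; indeed the two endpoints lie on a common side of $\mf{a}$, and pushing each across $\mf{a}$ yields two vertices separated by $\mf{b}$ and by no other hyperplane, hence joined by an edge crossing $\mf{b}$, and the resulting $4$--cycle bounds a square. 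Fact~(B): if $\mf{a}$ and $\mf{b}$ are transverse and both adjacent to a vertex $v$, then the two edges at $v$ crossing them span a square at $v$, because the interval between their far endpoints is crossed by exactly $\mf{a},\mf{b}$, which are transverse, and so is a single square.

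For the forward implication, assume $C(\mf{w})\cu C(\mf{u})$ with $\mf{u}\neq\mf{w}$. Any edge crossing $\mf{w}$ has both endpoints in $C(\mf{w})\cu C(\mf{u})$, so Fact~(A) gives $\mf{u}$ transverse to $\mf{w}$. If $\mf{s}\neq\mf{u}$ is transverse to $\mf{w}$, pick a square witnessing this; all four of its vertices lie in $C(\mf{w})\cu C(\mf{u})$, so an edge of that square crossing $\mf{s}$ has both endpoints in $C(\mf{u})$, and Fact~(A) yields $\mf{u}$ transverse to $\mf{s}$. Conversely, assume $\mf{u}$ is transverse to $\mf{w}$ and to every hyperplane transverse to $\mf{w}$. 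Working inside the $\CAT$ cube complex $\mf{w}$, I would show that the set of vertices $p$ of $\mf{w}$ for which both endpoints of the corresponding edge of $X$ lie in $C(\mf{u})$ is all of $\mf{w}$: it is nonempty because a square witnessing $\mf{u}\pf\mf{w}$ exhibits such a $p$, and it is stable under passing to a neighbour $p'$ of $p$ in $\mf{w}$ by applying Fact~(B) to $\mf{u}$ and the hyperplane $\mf{s}$ (transverse to $\mf{w}$, hence to $\mf{u}$) crossing the edge of $\mf{w}$ from $p$ to $p'$. Since $\mf{w}$ is connected, every vertex of $C(\mf{w})$ then lies in $C(\mf{u})$. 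Finally, all elements of $\mc{I}(\mf{w})$ are pairwise transverse (each nontrivial one is transverse to $\mf{w}$ and to every hyperplane transverse to $\mf{w}$, hence to every other element), so they span a common cube and $\#\mc{I}(\mf{w})\leq\dim X$.

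The main obstacle is the converse direction of part~(1): unlike the forward direction, transversality to the various $\mf{s}$ cannot be read off from a single edge, and the connectedness/propagation argument inside $\mf{w}$ is where care is needed. Facts~(A) and~(B) are standard, but stating them cleanly and verifying that the squares they produce genuinely lie in the carrier $C(\mf{w})$ is the crux of the proof.
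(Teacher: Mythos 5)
Your proof is correct, and parts~(2) and~(3) together with the forward implication of part~(1) run exactly as in the paper: everything is read off from the observation, recorded after Definition~\ref{I(w) defn}, that $\mc{I}(\mf{w})$ and $\mc{I}^0(\mf{w})$ encode containment, respectively equality, of carriers. Where you genuinely diverge is the converse implication of part~(1), which is indeed the only substantive step. The paper argues by contraposition: if some vertex $v$ lies in $C(\mf{w})$ but not in $C(\mf{u})$, it picks a hyperplane $\mf{v}\in\mscr{W}_v$ separating $v$ from $\mf{u}$ and notes that either $\mf{u}$ is not transverse to $\mf{w}$, or else $\mf{v}$ is transverse to $\mf{w}$ while $\mf{u}$ is not transverse to $\mf{v}$ (as $\mf{u}$ lies entirely in one halfspace of $\mf{v}$). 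That argument is shorter and needs no induction. Your direct argument --- propagating the condition ``both endpoints of the dual edge lie in $C(\mf{u})$'' across the connected cube complex $\mf{w}$ via Fact~(B) --- is also sound, provided you note the trivial subcase where the hyperplane $\mf{s}$ dual to the edge of $\mf{w}$ from $p$ to $p'$ equals $\mf{u}$ (the relevant square is then itself crossed by $\mf{u}$); it buys a more constructive picture of why connectedness of the hyperplane forces the carrier inclusion, at the cost of the base-case and propagation bookkeeping. Both of your Facts are standard; Fact~(B) in particular is precisely the description of $\lk v$ already recorded in Section~\ref{prelims} (two hyperplanes of $\mscr{W}_v$ span a square at $v$ if and only if they are transverse), so it could simply have been cited.
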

\begin{proof}
We begin with part~(1). If $\mf{u}\in\mc{I}(\mf{w})$, the carrier of $\mf{w}$ is contained in the carrier of $\mf{u}$, and it is clear that $\mf{u}$ is transverse to all other hyperplanes that intersect the carrier of $\mf{w}$ (i.e.\ $\mf{w}$ and the hyperplanes transverse to $\mf{w}$ other than $\mf{u}$). Conversely, suppose that $\mf{u}\not\in\mc{I}(\mf{w})$. Then there exists $v\in X^{(0)}$ adjacent to $\mf{w}$, but not to $\mf{u}$. There exists $\mf{v}\in\mscr{W}_v$ separating $\mf{u}$ and $v$. If $\mf{u}$ is transverse to $\mf{w}$, so must be $\mf{v}$. In conclusion, either $\mf{u}$ is not transverse to $\mf{w}$, or $\mf{v}$ is transverse to $\mf{w}$ and $\mf{u}$ is not transverse to $\mf{v}$. 

Finally, $\#\mc{I}(\mf{w})\leq\dim X$ follows from the observation that the elements of $\mc{I}(\mf{w})$ are pairwise transverse. This completes the proof of part~(1).

Recall that we have $\mf{u}\in\mc{I}^0(\mf{w})$ if and only if $\mf{u}$ and $\mf{w}$ have the same carrier. Part~(3) is immediate from the fact that this is an equivalence relation. Part~(2) follows from the additional observation that edges of $\C$ join exactly those pairs of hyperplanes that have intersecting carriers.
\end{proof}

\begin{cor}\label{the kernel}
Let $X$ be uniformly locally finite, with no extremal vertices. Let $N\leq S(\C^{(0)})$ be the subgroup leaving each subset $\mc{I}^0(\mf{w})\cu\C$ invariant.
\begin{enumerate}
\item $N$ is contained in $\aut\C$.
\item $N$ coincides with the kernel of $\rho\colon\aut\C\ra S(X^{(0)})$. 
\item $N$ is isomorphic to the direct product of the permutation groups $S(\mc{I}^0(\mf{w}))$. In particular, $\rho$ is injective if and only if each $\mc{I}^0(\mf{w})$ is a singleton.
\end{enumerate}
\end{cor}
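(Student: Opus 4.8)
The plan is to establish the three statements in sequence, leaning on two facts proved above: all hyperplanes in a single block $\mc{I}^0(\mf{w})$ share the same carrier, hence by Lemma~\ref{I(w) lemma}(2) the same star in $\C$; and, by Lemma~\ref{I(w) lemma}(3) together with Remark~\ref{I(w) and automorphisms}, the sets $\mc{I}^0(\mf{w})$ partition $\C^{(0)}$ and are permuted among themselves by every element of $\aut\C$.

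For part~(1) I would take $\s\in N$ and note that, since $\s$ leaves each block invariant, it sends every hyperplane to one in the same block, hence to one with the same carrier. Because two distinct vertices of $\C$ are adjacent precisely when their carriers meet, and $\s$ is a bijection sending each hyperplane to one with the same carrier, both $\s$ and $\s^{-1}$ preserve adjacency; hence $\s\in\aut\C$, which gives $N\cu\aut\C$.

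For part~(2) the key is to identify $\ker\rho$ with $N$ as subgroups of $\aut\C$. By construction of $\rho$ through the correspondence of Corollary~\ref{the map rho}, a map $\phi\in\aut\C$ lies in $\ker\rho$ exactly when it fixes every maximal clique $\mscr{W}_v$ setwise. If $\phi\in N$, i.e.\ $\phi(\mf{w})\in\mc{I}^0(\mf{w})$ for all $\mf{w}$ (so that $\phi$ preserves carriers), then $\mf{w}\in\mscr{W}_v\LRa v\in C(\mf{w})=C(\phi(\mf{w}))\LRa\phi(\mf{w})\in\mscr{W}_v$, so each $\mscr{W}_v$ is fixed and $\phi\in\ker\rho$. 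The converse is the step I expect to be the main obstacle. Suppose $\phi$ fixes every $\mscr{W}_v$ setwise. Then for each $v$ with $\mf{w}\in\mscr{W}_v$ we also have $\phi(\mf{w})\in\mscr{W}_v$, which gives only the single inclusion $C(\mf{w})\cu C(\phi(\mf{w}))$, i.e.\ $\phi(\mf{w})\in\mc{I}(\mf{w})$. The way around this is to run the same argument for $\phi^{-1}$, which also fixes each $\mscr{W}_v$ setwise: applying it to the hyperplane $\phi(\mf{w})$ yields $\mf{w}\in\mc{I}(\phi(\mf{w}))$, and the two inclusions together say exactly $\phi(\mf{w})\in\mc{I}^0(\mf{w})$, so $\phi\in N$.

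Finally, part~(3) is formal. A permutation of $\C^{(0)}$ lies in $N$ if and only if it restricts to an arbitrary, independently chosen permutation of each block $\mc{I}^0(\mf{w})$; hence $N$ is the direct product of the groups $S(\mc{I}^0(\mf{w}))$ taken over the blocks. Since every block is nonempty (indeed $\mf{w}\in\mc{I}^0(\mf{w})$), this product is trivial if and only if each factor $S(\mc{I}^0(\mf{w}))$ is trivial, i.e.\ each block is a singleton; by parts~(1) and~(2) this is precisely the condition that $\ker\rho=N$ be trivial, that is, that $\rho$ be injective.
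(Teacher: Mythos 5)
Your proof is correct and follows essentially the same route as the paper's: part~(1) via the fact that all hyperplanes in a block $\mc{I}^0(\mf{w})$ share a carrier (hence a star in $\C$), part~(2) via identifying $\ker\rho$ with the setwise stabiliser of every maximal clique $\mscr{W}_v$ together with the observation that $\mc{I}(\mf{w})$ is the intersection of the maximal cliques containing $\mf{w}$, and part~(3) formally from the partition into blocks. The only cosmetic difference is that in the converse direction of (2) you obtain $\mf{w}\in\mc{I}(\phi(\mf{w}))$ by running the same argument for $\phi^{-1}$, whereas the paper instead invokes the identity $\phi(\mc{I}(\mf{w}))=\mc{I}(\phi(\mf{w}))$ from Remark~\ref{I(w) and automorphisms}; both steps are equally valid.
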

\begin{proof}
Part~(1) is immediate from part~(2) of Lemma~\ref{I(w) lemma}. The first half of part~(3) is immediate from part~(3) of Lemma~\ref{I(w) lemma}. The second half of part~(3) will follow from part~(2).

Let us then conclude by proving part~(2). Recall that an element $\phi\in\aut\C$ lies in $\ker\rho$ if and only if $\phi$ leaves invariant each maximal clique of $\C$. Again by part~(2) of Lemma~\ref{I(w) lemma}, every maximal clique in $\C$ is a union of sets of the form $\mc{I}^0(\mf{w})$. Hence $N\leq\ker\rho$.

Conversely, consider $\phi\in\ker\rho$. Given that each $\mc{I}(\mf{w})$ is an intersection of maximal cliques, and $\phi$ leaves every maximal clique invariant, we have $\phi(\mc{I}(\mf{w}))=\mc{I}(\mf{w})$ for all $\mf{w}\in\mscr{W}(X)$. Recalling that $\phi(\mc{I}(\mf{w}))=\mc{I}(\phi(\mf{w}))$ by Remark~\ref{I(w) and automorphisms}, we have $\mc{I}(\mf{w})=\mc{I}(\phi(\mf{w}))$, hence $\phi(\mf{w})\in\mc{I}^0(\mf{w})$. Thus, part~(3) of Lemma~\ref{I(w) lemma} yields $\mc{I}^0(\phi(\mf{w}))=\mc{I}^0(\mf{w})$ and, again by Remark~\ref{I(w) and automorphisms}, we have $\phi(\mc{I}^0(\mf{w}))=\mc{I}^0(\mf{w})$ for every $\mf{w}\in\mscr{W}(X)$. Hence $\phi\in N$ and $\ker\rho\leq N$.
\end{proof}

Finally, we discuss when the homomorphism $\rho$ takes values within $\aut X$.

\begin{lem}\label{extremal vertices of hyperplanes}
Suppose that no hyperplane of $X$ has extremal vertices. Then: 
\begin{enumerate}
\item given a vertex $v\in X^{(0)}$ and transverse hyperplanes $\mf{u},\mf{w}\in\mscr{W}_v$, there exists $\mf{v}\in\mscr{W}_v-\{\mf{u},\mf{w}\}$ that is transverse to $\mf{u}$, but not to $\mf{w}$;
\item a vertex $w\in X^{(0)}$ is adjacent to $v\in X^{(0)}$ if and only if there does not exist $x\in X^{(0)}-\{v,w\}$ with $\mscr{W}_v\cap\mscr{W}_w\cu\mscr{W}_v\cap\mscr{W}_x$.
\end{enumerate}
\end{lem}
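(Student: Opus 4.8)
The plan is to reduce both statements to a single observation linking the combinatorics of $\mf u\cu X$ to that of $\lk v$. Fix $v\in X^{(0)}$ and $\mf u\in\mscr W_v$, and let $p$ be the vertex of the cube complex $\mf u$ lying on the unique edge at $v$ dual to $\mf u$. I would first verify that $\lk_{\mf u}p$ is isomorphic to the full subgraph of $\lk v$ spanned by the neighbours of $\mf u$: a vertex of $\lk_{\mf u}p$ is a square at $v$ spanned by $\mf u$ and some $\mf z\in\mscr W_v$, hence corresponds to a neighbour $\mf z$ of $\mf u$ in $\lk v$; and two such vertices span an edge of $\lk_{\mf u}p$ exactly when the corresponding hyperplanes span a $3$--cube at $v$ together with $\mf u$, which by flagness of $\lk v$ happens iff they are transverse. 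Thus the hypothesis that no hyperplane of $X$ has extremal vertices says precisely that, for every $v$ and every $\mf u\in\mscr W_v$, the subgraph of $\lk v$ spanned by the neighbours of $\mf u$ is not a cone.

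Part~(1) is then immediate: given transverse $\mf u,\mf w\in\mscr W_v$, the vertex $\mf w$ lies in this non-cone subgraph, so it cannot be adjacent there to all other vertices (else the subgraph would be a cone with apex $\mf w$); any vertex $\mf v$ witnessing this is an element of $\mscr W_v-\{\mf u,\mf w\}$ transverse to $\mf u$ but not to $\mf w$.

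For part~(2) I would assume $v\neq w$ and treat the two implications separately. The converse (no such $x$ $\Rightarrow$ $v,w$ adjacent) I prove by contraposition, and it needs no hypothesis: if $d(v,w)\geq 2$, choose $\mf m\in\mscr W(v|w)$ adjacent to $v$ and let $x$ be the neighbour of $v$ across $\mf m$, so $x\neq v,w$ and $x$ lies on a combinatorial geodesic from $v$ to $w$; since every $\mf z\in\mscr W_v\cap\mscr W_w$ has convex carrier containing $v$ and $w$, it contains $x$, whence $\mscr W_v\cap\mscr W_w\cu\mscr W_v\cap\mscr W_x$. The forward implication is where the hypothesis is used. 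Suppose $v,w$ are adjacent, so $\mscr W(v|w)=\{\mf m\}$, and suppose for contradiction that some $x\neq v,w$ has $\mscr W_v\cap\mscr W_w\cu\mscr W_v\cap\mscr W_x$, equivalently $\mscr W_v\cap\mscr W_w\cu\mscr W_x$. As $\mf m\in\mscr W_v\cap\mscr W_w$, the vertex $x$ lies in the carrier of $\mf m$; let $p$ be the vertex of $\mf m$ on $[v,w]$ and $p_x$ the vertex of $\mf m$ on the edge at $x$ dual to $\mf m$, so $p\neq p_x$ because $x\notin\{v,w\}$. Every $\mf z\in\mscr W_v$ transverse to $\mf m$ lies in $\mscr W_v\cap\mscr W_w$ (it spans a square at $v$ through $w$), hence in $\mscr W_x$; being transverse to $\mf m$ and adjacent to $x$, it shows $\mf z\cap\mf m$ is adjacent to $p_x$ in $\mf m$. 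As the hyperplanes of $\mf m$ adjacent to $p$ are exactly these $\mf z\cap\mf m$, this gives an inclusion of the hyperplanes of $\mf m$ adjacent to $p$ into those adjacent to $p_x$, for the distinct vertices $p\neq p_x$. Applying part~(1) of Lemma~\ref{cone links} inside the (locally finite) cube complex $\mf m$ forces $\lk_{\mf m}p$ to be a cone, i.e.\ $p$ to be an extremal vertex of $\mf m$, contradicting the hypothesis.

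The part I expect to require the most care is the preliminary link identification (matching squares and $3$--cubes at $v$ with cells of $\mf u$ at $p$) and, in part~(2), the recognition that a competitor vertex $x$ must force a vertex of the separating hyperplane $\mf m$ to be extremal; once the star inclusion of hyperplanes adjacent to $p$ into those adjacent to $p_x$ inside $\mf m$ is established, Lemma~\ref{cone links} closes the argument. I would also keep an eye on the degenerate cases---chiefly that the set of hyperplanes of $\mf m$ adjacent to $p$ is nonempty (true since $\mf m$ is connected with at least two vertices)---so that the resulting link really is a cone rather than empty.
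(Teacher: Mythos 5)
Your argument is correct and follows the paper's proof in all essentials: part~(1) via non-extremality of the projection of $v$ to $\mf{u}$ (whose link in $\mf{u}$ is the subgraph of $\lk v$ spanned by the neighbours of $\mf{u}$), the easy direction of~(2) via convexity of carriers, and the forward direction of~(2) by projecting to the separating hyperplane $\mf{m}$ and exploiting non-extremality of the projected vertex $p\neq p_x$. The only cosmetic difference is that you obtain the final contradiction by citing Lemma~\ref{cone links}(1) applied inside $\mf{m}$ (whose relevant implication indeed requires no finiteness hypothesis), whereas the paper re-runs that argument inline to produce an explicit hyperplane in $\mscr{W}_v\cap\mscr{W}_w-\mscr{W}_x$.
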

\begin{proof}
We first prove part~(1). Let $v'\in\mf{u}$ be the projection of $v$ to $\mf{u}$ (i.e.\ the midpoint of the only edge of $X$ that contains $v$ and crosses $\mf{u}$).
This is a vertex of the cubical structure on $\mf{u}$ and $\mf{u}\cap\mf{w}$ is a hyperplane of $\mf{u}$ adjacent to $v'$. Since $\mf{u}$ has no extremal vertices, there exists a hyperplane $\mf{v}'$ of the cube complex $\mf{u}$ that is adjacent to $v'$ and disjoint from $\mf{u}\cap\mf{w}$. If $\mf{v}\in\mscr{W}(X)$ is the hyperplane with $\mf{v}'=\mf{v}\cap\mf{u}$, then $\mf{v}$ is adjacent to $v$, transverse to $\mf{u}$, and disjoint from $\mf{w}$ (disjointness follows, for instance, from the Helly property for hyperplane carriers). 

We now prove part~(2). If $w$ is not adjacent to $v$, there exists a vertex $x\in X^{(0)}-\{v,w\}$ that is adjacent to $v$ and lies on a combinatorial geodesic between $v$ and $w$. By convexity of carriers, we have $\mscr{W}_v\cap\mscr{W}_w\cu\mscr{W}_v\cap\mscr{W}_x$.

Conversely, suppose that $v$ and $w$ are adjacent and let $x\in X^{(0)}-\{v,w\}$ be such that $\mscr{W}_v\cap\mscr{W}_w\cu\mscr{W}_v\cap\mscr{W}_x$. Let $\mf{w}$ be the only hyperplane separating $v$ and $w$; since $\mf{w}\in\mscr{W}_v\cap\mscr{W}_w$, the vertex $x$ must lie in the carrier of $\mf{w}$. Let $x'$ and $v'=w'$ be the projections of the vertices $x,v,w$ to the hyperplane $\mf{w}$; since $x\not\in\{v,w\}$, we have $x'\neq v'$. Hence there exists a hyperplane $\mf{u}'$ of the cube complex $\mf{w}$ such that $\mf{u}'$ is adjacent to $v'$ and separates $v'$ from $x'$. Since $\mf{w}$ has no extremal vertices, there exists a hyperplane $\mf{v}'$ of $\mf{w}$ such that $\mf{v}'$ is adjacent to $v'$ and disjoint from $\mf{u}'$; in particular, $\mf{v}'$ is not adjacent to $x'$. Now, let $\mf{v}\in\mscr{W}(X)$ be the hyperplane with $\mf{v}'=\mf{v}\cap\mf{w}$. Note that $x$ is not adjacent to $\mf{v}$, or $x$ would lie in a square of $X$ crossed by $\mf{v}$ and $\mf{w}$, in which case $x'$ would be adjacent to $\mf{v}'$. In conclusion, $\mf{v}\in\mscr{W}_v\cap\mscr{W}_w-\mscr{W}_x$, which contradicts our assumption that $\mscr{W}_v\cap\mscr{W}_w\cu\mscr{W}_v\cap\mscr{W}_x$. 
\end{proof}

\begin{thm}\label{isomorphism}
Let $X$ be a uniformly locally finite $\CAT$ cube complex with no extremal vertices and with no hyperplanes containing extremal vertices. The map $\iota\colon\aut X\ra\aut\C$ is an isomorphism and $\rho$ is its inverse.
\end{thm}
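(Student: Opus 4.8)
The plan is to show that $\rho$ is a genuine two-sided inverse of $\iota$, deducing the theorem formally from the lemmas already in hand. Corollary~\ref{the map rho} already gives $\rho\o\iota=\id_{\aut X}$, so $\iota$ is injective and $\rho$ is a left inverse when viewed as a map into $S(X^{(0)})$. It therefore suffices to prove two statements: (a) $\rho$ is injective, and (b) $\rho$ takes values in the subgroup $\aut X<S(X^{(0)})$. Indeed, given (a) and (b), the map $\rho\colon\aut\C\ra\aut X$ is a bijective homomorphism, and composing $\rho\o\iota=\id_{\aut X}$ on the left with $\rho^{-1}$ yields $\iota=\rho^{-1}$; thus $\iota$ is an isomorphism with inverse $\rho$.

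For (a), I would invoke Corollary~\ref{the kernel}(3), by which $\rho$ is injective exactly when each set $\mc{I}^0(\mf{w})$ is a singleton. So suppose $\mf{u}\in\mc{I}^0(\mf{w})$ with $\mf{u}\neq\mf{w}$. Then $\mf{u}$ and $\mf{w}$ share a carrier and, by Lemma~\ref{I(w) lemma}(1), $\mf{u}$ is transverse to $\mf{w}$ and to every hyperplane transverse to $\mf{w}$. Hence $\mf{u}\cap\mf{w}$ is a hyperplane of the cube complex $\mf{w}$ that is transverse to all other hyperplanes of $\mf{w}$, so any vertex of $\mf{w}$ lying on the carrier of $\mf{u}\cap\mf{w}$ has a cone link in $\mf{w}$, i.e.\ is extremal in $\mf{w}$. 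This contradicts the hypothesis that no hyperplane of $X$ has extremal vertices. (Alternatively, one fixes a vertex $v$ with $\mf{u},\mf{w}\in\mscr{W}_v$ and derives the contradiction directly from Lemma~\ref{extremal vertices of hyperplanes}(1).) Thus each $\mc{I}^0(\mf{w})$ is a singleton and $\rho$ is injective.

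The substantial point is (b). Under the vertex--clique correspondence of Corollary~\ref{the map rho}, the permutation $g=\rho(\phi)$ sends a vertex $v$ to the vertex $v'$ determined by $\mscr{W}_{v'}=\phi(\mscr{W}_v)$. Since $\aut X=\aut X^{(1)}$, it is enough to verify that $g$ preserves adjacency of vertices, and for this I would feed in the purely clique-theoretic criterion of Lemma~\ref{extremal vertices of hyperplanes}(2): vertices $v,w$ are adjacent if and only if there is no $x\in X^{(0)}-\{v,w\}$ with $\mscr{W}_v\cap\mscr{W}_w\cu\mscr{W}_v\cap\mscr{W}_x$. As a self-bijection of $\C^{(0)}=\mscr{W}(X)$, the map $\phi$ commutes with intersections and preserves inclusions, while $x\mapsto x'=g(x)$ is a bijection carrying $X^{(0)}-\{v,w\}$ onto $X^{(0)}-\{v',w'\}$. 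Applying $\phi$ therefore turns the condition ``$\exists\,x\colon\mscr{W}_v\cap\mscr{W}_w\cu\mscr{W}_v\cap\mscr{W}_x$'' into ``$\exists\,x'\colon\mscr{W}_{v'}\cap\mscr{W}_{w'}\cu\mscr{W}_{v'}\cap\mscr{W}_{x'}$''. Hence $v,w$ are adjacent if and only if $v',w'$ are, so $g$ is a graph automorphism of $X^{(1)}$ and $\rho(\phi)=g\in\aut X$.

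Combining (a) and (b), $\rho\colon\aut\C\ra\aut X$ is an injective homomorphism, and $\rho\o\iota=\id_{\aut X}$ makes it surjective as well; hence it is an isomorphism and $\iota=\rho^{-1}$, finishing the proof. The only geometric input is Lemma~\ref{extremal vertices of hyperplanes}; I expect the genuine obstacle --- were that lemma not already available --- to be establishing its part~(2), namely that adjacency of vertices is detected by the sets $\mscr{W}_v$ alone, which is exactly where the no-extremal-vertices hypothesis on hyperplanes is needed. Once that combinatorial characterization is in place, everything else is formal bookkeeping around its $\phi$-invariance.
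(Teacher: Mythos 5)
Your proposal is correct and follows essentially the same route as the paper: Lemma~\ref{extremal vertices of hyperplanes}(2) shows $\rho$ lands in $\aut X$, and injectivity of $\rho$ comes from Corollary~\ref{the kernel} once the sets $\mc{I}^0(\mf{w})$ (in the paper, $\mc{I}(\mf{w})$, via Lemma~\ref{I(w) lemma}(1) and Lemma~\ref{extremal vertices of hyperplanes}(1)) are shown to be singletons. Your direct construction of an extremal vertex of $\mf{w}$ from a second element of $\mc{I}^0(\mf{w})$ is a valid minor variant of that last step.
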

\begin{proof}
By part~(2) of Lemma~\ref{extremal vertices of hyperplanes}, any permutation of $X^{(0)}$ in the image of $\rho\colon\aut\C\ra S(X^{(0)})$ preserves adjacency of vertices. It follows that $\rho$ takes values in $\aut X$ and we have already shown in Corollary~\ref{the map rho} that $\rho\o\iota=\id_{\aut X}$. Finally, part~(1) of Lemma~\ref{extremal vertices of hyperplanes} and part~(1) of Lemma~\ref{I(w) lemma} guarantee that $\mc{I}(\mf{w})=\{\mf{w}\}$ for every $\mf{w}\in\mscr{W}(X)$. Thus, Corollary~\ref{the kernel} shows that $\rho$ is injective.
\end{proof}

\section{Proof of the \protect\hyperlink{Davis intro}{Proposition}.}

In this section, we consider a right-angled Coxeter group $W_{\G}$, the universal cover $Y_{\G}$ of its Davis complex, and the contact graph $\C_{\G}=\C(Y_{\G})$. 
 
Let ${\rm cl}_n$ denote the complete graph on $n$ vertices. We can always split the finite graph $\G$ as a join ${\rm cl}_n\ast\G'$ for some $n\geq 0$ and a subgraph $\G'\cu\G$ that is not a cone. This corresponds to splittings $W_{\G}=(\Z/2\Z)^n\x W_{\G'}$ and:
\begin{align*}
&Y_{\G}=[0,1]^n\x Y_{\G'},  & &\aut Y_{\G}=((\Z/2\Z)^n\rtimes S_n)\x\aut Y_{\G'}, \\
&\C_{\G}={\rm cl}_n\ast\C_{\G'}, & &\aut\C_{\G}=S_n\x\aut\C_{\G'},
\end{align*}  
where $S_n$ denotes the symmetric group on $n$ elements. The natural map $\iota\colon\aut Y_{\G}\ra\aut\C_{\G}$ vanishes on the $(\Z/2\Z)^n$ subgroup, and it restricts to the natural map $\iota\colon\aut Y_{\G'}\ra\aut\C_{\G'}$ on the right-hand factors. 

Therefore, the general study of $\C_{\G}$ and its automorphism group reduces to the case $n=0$. Since links of vertices of $Y_{\G}$ are all isomorphic to the graph $\G$, this is equivalent to $Y_{\G}$ having no extremal vertices. 

Let us write $Y=Y_{\G}$ and $\C=\C_{\G}$ for short in the rest of the section. The discussion on Salvetti complexes in the paragraph before Remark~\ref{non-discrete} readily generalises to Davis complexes, showing that every hyperplane of $Y$ is labelled by a vertex of $\G$. Let $\g\colon\mscr{W}(Y)\ra\G^{(0)}$ denote the map assigning to each hyperplane its label. 

\begin{lem}\label{hyperplanes coming from star inclusions}
Given $\mf{w},\mf{u}\in\mscr{W}(Y)$, we have $\mf{u}\in\mc{I}(\mf{w})$ if and only if the carriers of $\mf{u}$ and $\mf{w}$ intersect and $\St\g(\mf{w})\cu\St\g(\mf{u})$. In this case, we have $\mf{u}\in\mc{I}^0(\mf{w})$ if and only if $\St\g(\mf{w})=\St\g(\mf{u})$.
\end{lem}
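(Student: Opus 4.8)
The plan rests on the special structure of the labelling map $\g$ for Davis complexes, together with the carrier reformulation of $\mc{I}(\cdot)$ noted after Definition~\ref{I(w) defn} (namely $\mf{u}\in\mc{I}(\mf{w})$ if and only if $C(\mf{w})\cu C(\mf{u})$) and the characterisation in part~(1) of Lemma~\ref{I(w) lemma}. The crucial observation I would record first is that, since links of vertices of $Y$ are isomorphic to $\G$, the labelling realises this isomorphism: for each $v\in Y^{(0)}$ the map $\g$ restricts to a bijection $\mscr{W}_v\to\G^{(0)}$, and two hyperplanes lying in a common $\mscr{W}_v$ are transverse if and only if their labels are adjacent in $\G$. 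Since any two transverse hyperplanes lie in a common $\mscr{W}_v$, it follows in particular that transverse hyperplanes always have adjacent labels. I would prove the stated equivalence for $\mc{I}$ and then deduce the $\mc{I}^0$ statement by symmetry.

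For the forward implication I would assume $\mf{u}\in\mc{I}(\mf{w})$ (so $C(\mf{w})\cu C(\mf{u})$, whence the carriers intersect) and, discarding the trivial case $\mf{u}=\mf{w}$, fix a vertex $v\in C(\mf{w})\cu C(\mf{u})$, so that $\mf{u},\mf{w}\in\mscr{W}_v$. By Lemma~\ref{I(w) lemma}(1), $\mf{u}$ is transverse to $\mf{w}$ and to every hyperplane transverse to $\mf{w}$. Reading labels off $\lk v\cong\G$: transversality of $\mf{u}$ and $\mf{w}$ gives $\g(\mf{w})\in\St\g(\mf{u})$, and for each $b\in\lk\g(\mf{w})$ the hyperplane of $\mscr{W}_v$ labelled $b$ is transverse to $\mf{w}$, hence to $\mf{u}$, so $b\in\St\g(\mf{u})$. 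This yields $\St\g(\mf{w})\cu\St\g(\mf{u})$.

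The converse is the step I expect to be the main obstacle, because $\St\g(\mf{w})\cu\St\g(\mf{u})$ is only a condition on \emph{labels}, whereas I must upgrade it to the global containment $C(\mf{w})\cu C(\mf{u})$ involving hyperplanes that may be far apart, where adjacency of labels need not imply transversality. My plan to defeat this is to never compare distant hyperplanes directly. Fixing $v\in C(\mf{w})\cap C(\mf{u})$ and an arbitrary $x\in C(\mf{w})$, I would take a combinatorial geodesic $v=x_0,\dots,x_k=x$; by convexity of the carrier it stays in $C(\mf{w})$, so each hyperplane $\mf{h}_i$ dual to the edge $x_ix_{i+1}$ is either $\mf{w}$ or transverse to $\mf{w}$ (the edge lies in a cube crossed by $\mf{w}$). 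I would then show $\mf{u}\in\mscr{W}_{x_i}$ by induction: at each step both $\mf{u}$ and $\mf{h}_i$ lie in $\mscr{W}_{x_i}$, and since the label of $\mf{h}_i$ lies in $\St\g(\mf{w})\cu\St\g(\mf{u})$, it is equal or adjacent to $\g(\mf{u})$, so $\mf{u}$ and $\mf{h}_i$ are equal or transverse \emph{as detected at the shared vertex $x_i$}; crossing the edge dual to $\mf{h}_i$ therefore keeps $\mf{u}$ adjacent, giving $\mf{u}\in\mscr{W}_{x_{i+1}}$. Hence $\mf{u}\in\mscr{W}_x$ for all $x\in C(\mf{w})$, i.e.\ $\mf{u}\in\mc{I}(\mf{w})$. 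The point is that sharing the vertex $x_i$ is exactly what converts the label hypothesis into genuine transversality.

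Finally, for the $\mc{I}^0$ statement I would simply unwind the definition: $\mf{u}\in\mc{I}^0(\mf{w})$ means $\mf{u}\in\mc{I}(\mf{w})$ and $\mf{w}\in\mc{I}(\mf{u})$. Applying the equivalence just proved in both directions (the carriers intersecting throughout), this holds precisely when $\St\g(\mf{w})\cu\St\g(\mf{u})$ and $\St\g(\mf{u})\cu\St\g(\mf{w})$, that is, when $\St\g(\mf{w})=\St\g(\mf{u})$.
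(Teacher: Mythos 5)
Your proof is correct and follows essentially the same route as the paper's: the forward direction reads off part~(1) of Lemma~\ref{I(w) lemma} at a shared vertex of the two carriers, and the converse propagates adjacency to $\mf{u}$ along a path inside $C(\mf{w})$, whose dual hyperplanes are labelled by elements of $\St\g(\mf{w})\cu\St\g(\mf{u})$. Your induction merely makes explicit the paper's one-line assertion that such edges cannot leave $C(\mf{u})$.
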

\begin{proof}
If $\mf{u}\in\mc{I}(\mf{w})$, the carrier of $\mf{w}$ is contained in the carrier of $\mf{u}$, and we have $\St\g(\mf{w})\cu\St\g(\mf{u})$ by part~(1) of Lemma~\ref{I(w) lemma}. Conversely, suppose that $\St\g(\mf{w})\cu\St\g(\mf{u})$ and that a vertex $v\in Y^{(0)}$ lies in the carrier of both $\mf{u}$ and $\mf{w}$. Any other vertex $w$ in the carrier of $\mf{w}$ is joined to $v$ by a path that only crosses edges labelled by elements of $\St\g(\mf{w})$. Since $\St\g(\mf{w})\cu\St\g(\mf{u})$, none of these edges can leave the carrier of $\mf{u}$, hence $\mf{u}\in\mscr{W}_w$. This shows that $\mf{u}\in\mc{I}(\mf{w})$. The statement about $\mc{I}^0(\mf{w})$ follows immediately.
\end{proof}

We are now ready to prove the \hyperlink{Davis intro}{Proposition} from the introduction, which we recall here for ease of reference.

\begin{prop}\label{prop recalled}
Let $W_{\G}$ have no finite direct factors. Then:
\begin{enumerate}
\item $Y$ has no extremal vertices, so $\iota\colon\aut Y\ra\aut\C$ is injective.
\item $Y$ has a hyperplane with extremal vertices if and only if there exist distinct vertices $a,b\in\G^{(0)}$ with $\St a\cu\St b$. In this case, the subgroup $\iota(\aut Y)<\aut\C$ has infinite index.
\item The homomorphism $\rho\colon\aut\C\ra S(Y^{(0)})$ is injective if and only if there do not exist vertices $a,b\in\G^{(0)}$ with $\St a=\St b$. When $\rho$ is not injective, its kernel is an uncountable torsion subgroup.
\end{enumerate}
\end{prop}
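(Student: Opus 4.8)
The plan is to reduce everything to the machinery of Section~3 by way of two structural observations: the link of every vertex of $Y$ is isomorphic to $\G$ (as recalled before Lemma~\ref{hyperplanes coming from star inclusions}), and the link in the cube complex $\mf{w}$ of every vertex of a hyperplane $\mf{w}$ with label $a=\g(\mf{w})$ is isomorphic to the full subgraph of $\G$ spanned by $\lk a$. For part~(1), since $W_{\G}$ has no finite direct factors, $\G$ is not a cone (a cone $\G=\{\ast\}\ast\G'$ would split off a $\Z/2\Z$ factor). As $\lk v\cong\G$ for every $v\in Y^{(0)}$, no vertex of $Y$ is extremal, and $Y$ is uniformly locally finite because every vertex link equals the finite graph $\G$. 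Corollary~\ref{the map rho} then supplies $\rho\o\iota=\id$, so $\iota$ is injective.

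For the equivalence in part~(2), a hyperplane $\mf{w}$ with $\g(\mf{w})=a$ has all of its vertices with link isomorphic to $\lk a$, so it has extremal vertices if and only if $\lk a$ is a cone. I would then check the elementary equivalence: $\lk a$ is a cone (with cone vertex $b$) iff there is $b\neq a$ with $\St a\cu\St b$. Indeed, the cone condition says some $b\in\lk a$ is adjacent to every vertex of $\lk a\setminus\{b\}$, and unwinding the definitions of stars shows this is exactly $\St a\cu\St b$. Since every vertex of $\G$ labels some hyperplane, $Y$ has a hyperplane with extremal vertices iff such a pair $a,b$ exists.

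For the infinite-index claim in part~(2), I would organize the argument around the map $\rho$. Write $Q=\rho(\aut\C)\leq S(Y^{(0)})$ and $N=\ker\rho$. From $\rho\o\iota=\id$ we get $N\cap\iota(\aut Y)=1$ and $\rho(\iota(\aut Y))=\aut Y\leq Q$; hence the $N$--translates of $\iota(\aut Y)$ are pairwise distinct, and $\rho$ induces a surjection of coset spaces $\aut\C/\iota(\aut Y)\twoheadrightarrow Q/\aut Y$. Thus $[\aut\C:\iota(\aut Y)]$ is infinite as soon as either $N$ is infinite or $[Q:\aut Y]$ is infinite. If some pair of distinct vertices satisfies $\St a=\St b$, then (exactly as in part~(3) below) every hyperplane of label $a$ lies in an $\mc{I}^0$--class of size at least $2$; these classes are pairwise distinct and infinitely many, since $Y$ is infinite, so $N\cong\prod_{\mf{w}}S(\mc{I}^0(\mf{w}))$ is already uncountable. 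The remaining case, where no two distinct labels have equal stars but some pair has $\St a\subsetneq\St b$, is the one I expect to be the \emph{main obstacle}: here $N=1$, so one must produce infinitely many elements of $\aut\C$ realizing non-cubical permutations of $Y^{(0)}$, pairwise inequivalent modulo $\aut Y$. My approach would be to exploit the failure, at a hyperplane $\mf{w}$ with extremal vertices, of the edge-detection criterion of part~(2) of Lemma~\ref{extremal vertices of hyperplanes}: with $\mf{u}\in\mc{I}(\mf{w})$ the companion hyperplane (so the carrier of $\mf{w}$ sits inside that of $\mf{u}$ and $\St_{\C}\mf{w}\cu\St_{\C}\mf{u}$), a pair of adjacent vertices across $\mf{w}$ and a suitable pair of non-adjacent vertices can have identical intersections $\mscr{W}_{\cdot}\cap\mscr{W}_{\cdot}$, so an automorphism of $\C$ is not forced to preserve that edge. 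I would build a ``local flip'' of $\C$ supported near one such $\mf{w}$, verify directly that it preserves the contact relation \emph{globally}, and check that its $\rho$--image sends an edge of $Y$ to a non-edge; performing this at infinitely many disjoint label-$a$ hyperplanes and taking products yields infinitely many cosets. The delicate points are the global well-definedness of these maps as graph automorphisms of $\C$ and the verification that the resulting permutations are genuinely inequivalent modulo $\aut Y$.

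For part~(3), Corollary~\ref{the kernel} gives that $\rho$ is injective iff every $\mc{I}^0(\mf{w})$ is a singleton, with $\ker\rho=N\cong\prod_{\mf{w}}S(\mc{I}^0(\mf{w}))$. By Lemma~\ref{hyperplanes coming from star inclusions}, $\mf{u}\in\mc{I}^0(\mf{w})$ iff the carriers of $\mf{u}$ and $\mf{w}$ meet and $\St\g(\mf{u})=\St\g(\mf{w})$; since two hyperplanes sharing a carrier are transverse (part~(1) of Lemma~\ref{I(w) lemma}) and transverse hyperplanes of $Y$ have distinct labels, a non-singleton class forces two \emph{distinct} labels with equal stars, and conversely any such pair produces a non-singleton class at every label-$a$ hyperplane. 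This yields the stated criterion. When $\rho$ is not injective, $N$ has infinitely many nontrivial direct factors, one for each of the infinitely many label-$a$ hyperplanes, hence is uncountable; and since each $\mc{I}^0(\mf{w})$ has at most $\dim Y$ elements by part~(1) of Lemma~\ref{I(w) lemma}, the factors have uniformly bounded order, so $N$ has finite exponent and is in particular torsion.
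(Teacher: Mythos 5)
Parts (1) and (3) and the characterization in part (2) are correct and follow the paper's own route: links of vertices of $Y$ are copies of $\G$, links of vertices of a label-$a$ hyperplane are copies of the subgraph spanned by $\lk a$, and the injectivity/kernel statements reduce to Corollary~\ref{the kernel} together with Lemma~\ref{hyperplanes coming from star inclusions}. Your handling of the case $\St a=\St b$ in part (2) via the uncountability of $\ker\rho$ (using $\ker\rho\cap\iota(\aut Y)=1$) is a legitimate shortcut not taken by the paper.

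The genuine gap is exactly where you flag it: in the case $\St a\subsetneq\St b$ with no equal stars, you must actually exhibit infinitely many automorphisms of $\C$ in distinct cosets of $\iota(\aut Y)$, and your ``local flip'' is only an intention, with its global well-definedness explicitly left unverified. This construction is the entire content of the second half of part (2), so the proof is incomplete without it. For the record, the paper's mechanism is not a flip supported near a single hyperplane with extremal vertices, but a \emph{halfspace} construction: fix $w\in Y^{(0)}$ and hyperplanes $\mf{a},\mf{b}\in\mscr{W}_w$ labelled $a,b$, partition $\mscr{W}(Y)=\mc{A}^+\sqcup\mc{A}^-\sqcup\mc{T}$ according to which side of $\mf{a}$ a hyperplane lies in (with $\mc{T}$ the hyperplanes transverse or equal to $\mf{a}$), and define $\phi$ to be the identity on $\mc{A}^-\sqcup\mc{T}$ and $\iota(r_{\mf{b}})$ on $\mc{A}^+\sqcup\mc{T}$, where $r_{\mf{b}}$ is the reflection in $\mf{b}$. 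Two facts make this a well-defined automorphism of $\C$: no edge of $\C$ joins $\mc{A}^+$ to $\mc{A}^-$ (such hyperplanes are separated by $\mf{a}$), and $\iota(r_{\mf{b}})$ is the identity on $\mc{T}$ because $\St a\cu\St b$ forces $\mf{b}\in\mc{I}(\mf{a})$, so every element of $\mc{T}$ is transverse to $\mf{b}$ and hence fixed by $r_{\mf{b}}$. One then checks $\rho(\phi)\notin\aut Y$ by looking at a square crossed by $\mf{a}$ and $\mf{b}$, and obtains infinitely many cosets by repeating the construction at label-$a$ hyperplanes $\mf{a}_n$ going to infinity and comparing fixed-point sets. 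Your sketch does not identify either of the two facts that make $\phi$ globally well defined, and the first of them (cutting $\C$ along the halfspaces of $\mf{a}$ rather than working ``near $\mf{w}$'') is the idea your proposal is missing.
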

\begin{proof}
As already observed, the fact that $W_{\G}$ has no finite factors implies that $Y$ has no extremal vertices. Part~(1) thus follows from Corollary~\ref{the map rho}. 

Let us now prove part~(3). By part~(1) of Lemma~\ref{I(w) lemma}, the cardinality of each set $\#\mc{I}^0(\mf{w})$ is bounded above by $\dim Y$. Thus, part~(3) of Corollary~\ref{the kernel} shows that $\ker\rho$ is a (possibly trivial) torsion group. If no two vertices of $\G$ have the same star, then Lemma~\ref{hyperplanes coming from star inclusions} implies that each set $\mc{I}^0(\mf{w})$ is a singleton. In this case, part~(3) of Corollary~\ref{the kernel} shows that $\rho$ is injective. 

Conversely, suppose that distinct vertices $a,b\in\G^{(0)}$ have the same star. Since $W_{\G}$ has no direct factors, there exist infinitely many $\mf{w}_i\in\mscr{W}(Y)$ with $\g(\mf{w}_i)=a$. The sets $\mc{I}^0(\mf{w}_i)$ are pairwise disjoint and each contains at least two elements, by Lemma~\ref{hyperplanes coming from star inclusions}. Part~(3) of Corollary~\ref{the kernel} yields that $\ker\rho$ is a direct product of countably many non-trivial groups, hence uncountable.

We conclude the proof of the proposition by addressing part~(2). For every $\mf{w}\in\mscr{W}(Y)$, the induced cubical structure on $\mf{w}$ is isomorphic to $Y_{\L}$, where $\L$ is the full subgraph of $\G$ with vertex set $\lk\g(\mf{w})$. This has extremal vertices if and only if $\L$ is a cone over some $b\in\L^{(0)}$, i.e.\ if $\St\g(\mf{w})\cu\St b$. We deduce that $Y$ contains a hyperplane with an extremal vertex if and only if there exist $a,b\in\G^{(0)}$ with $\St a\cu\St b$. 

We are only left to show the second half of part~(2). Let $a,b\in\G^{(0)}$ be distinct vertices with $\St a\cu\St b$, consider $w\in Y^{(0)}$, and let $\mf{a},\mf{b}\in\mscr{W}_w$ be labelled by $a,b$, respectively. Let $\mf{a}^{\pm}$ be the two halfspaces bounded by $\mf{a}$. We define a partition $\mscr{W}(Y)=\mc{A}^+\sqcup\mc{A}^-\sqcup\mc{T}$, where a hyperplane lies in $\mc{A}^{\pm}$ if it is contained in $\mf{a}^{\pm}$, and it lies in $\mc{T}$ if it is transverse or equal to $\mf{a}$.

Let $r_{\mf{b}}\in W_{\G}<\aut Y$ be the reflection in the hyperplane $\mf{b}$. Consider the map $\phi\colon\C^{(0)}\ra\C^{(0)}$ defined as the identity on $\mc{A}^-\sqcup\mc{T}$, and as $\iota(r_{\mf{b}})$ on $\mc{A}^+\sqcup\mc{T}$ (note that $\iota(r_{\mf{b}})$ coincides with the identity on $\mc{T}$). Since no edge of $\C$ connects an element of $\mc{A}^+$ to an element of $\mc{A}^-$, it is clear that $\phi\in\aut\C$. 

If a vertex $v\in Y^{(0)}$ lies in the halfspace $\mf{a}^-$, we have $\mscr{W}_v\cu\mc{A}^-\sqcup\mc{T}$, and this set is fixed pointwise by $\phi$. In this case, we have $\rho(\phi)v=v$. On the other hand, if $v$ lies in $\mf{a}^+$, we have $\mscr{W}_v\cu\mc{A}^+\sqcup\mc{T}$, where $\phi=\iota(r_{\mf{b}})$. Hence $\rho(\phi)v=r_{\mf{b}}v$. Looking at the action on any square of $Y$ that is crossed by both $\mf{a}$ and $\mf{b}$, we see that $\rho(\phi)\not\in\aut Y$.

Let us write $\phi_{\mf{a}^+,\mf{b}}$ in the rest of the proof, highlighting the dependence on the choice of $\mf{a}^+$ and $\mf{b}$ in the definition of $\phi$. Pick a sequence $\mf{a}_n\in\mscr{W}(Y)$ of hyperplanes with $\g(\mf{a}_n)=a$ such that the distance between $\mf{a}$ and $\mf{a}_n$ diverges\footnote{One can show the existence of the $\mf{a}_n$ as follows. Since $W_{\G}$ has no finite factors, there exists a hyperplane $\mf{c}\in\mscr{W}(Y)$ disjoint from $\mf{a}$. Let $r_{\mf{a}},r_{\mf{c}}\in W_{\G}$ denote the reflections in $\mf{a}$ and $\mf{c}$, respectively. Then we can take $\mf{a}_n:=(r_{\mf{c}}r_{\mf{a}})^n\mf{a}$.}. Let $\mf{b}_n\in\mscr{W}(Y)$ be (the only) hyperplanes transverse to $\mf{a}_n$ with $\g(\mf{b}_n)=b$. We choose the side $\mf{a}_n^+$ so that it is disjoint from $\mf{a}^+$ and set $\psi_n:=\phi_{\mf{a}^+,\mf{b}}\o\phi_{\mf{a}_n^+,\mf{b}_n}$. By the above discussion, we have $\psi_n\in\aut\C$. 

The permutation $\rho(\psi_n)\in S(Y^{(0)})$ fixes exactly those vertices of $Y$ that do not lie in $\mf{a}^+\sqcup\mf{a}_n^+$. Since the distance between $\mf{a}^+$ and $\mf{a}_n^+$ diverges, the $\rho(\psi_n)$ lie in pairwise distinct cosets of ${\aut Y<\rho(\aut\C)}$. Hence, since $\rho\o\iota=\id_{\aut Y}$, the $\psi_n$ lie in pairwise distinct cosets of ${\iota(\aut Y)<\aut\C}$. 

This concludes the proof.
\end{proof}

\bibliography{mybib}
\bibliographystyle{alpha}

\end{document}